\newtheorem{thm}{Theorem}[section]
\newtheorem{cor}[thm]{Corollary}
\newtheorem{prop}[thm]{Proposition}
\newtheorem{rmk}{Remark}
\def\propautorefname~#1\null{%
  Proposition~#1\null
}
\newcommand{\abs}[1]{\left\lvert #1\right\rvert}
\newcommand{\OV}{\operatorname{\mathsf{V}}}
\newcommand{\OW}{\operatorname{\mathsf{W}}}
\newcommand{\Id}{\operatorname{\mathsf{Id}}}
\newcommand{\Op}{\operatorname{\mathsf{p}}}
\newcommand{\mOV}{\overline{\OV}}
\newcommand{\mOW}{\overline{\OW}}
\newcommand{\wH}{\widetilde{H}}
\newcommand{\Vx}{\mathbf{x}}
\newcommand{\Vy}{\mathbf{y}}
\newcommand{\Vn}{\mathbf{n}}
\newcommand{\R}{\mathbb{R}}
\newcommand{\N}{\mathbb{N}}
\newcommand{\D}{\mathbb{D}}
\renewcommand{\P}{\mathbb{P}}
\renewcommand{\S}{\mathbb{S}}
\newcommand{\dual}[2]{\left\langle #1\,,\,#2\right\rangle}
\newcommand{\norm}[2]{\left\lVert #1\right\rVert_{#2}}
\newcommand{\oL}{\operatorname{\mathcal{L}}}
\definecolor{OliveGreen}{rgb}{0,0.4,0.1}
\definecolor{gray}{rgb}{0.3,0.3,0.3}
\newcommand{\curl}{\operatorname{\underline{\mathrm{curl}}}}
\def\Xint#1{\mathchoice
   {\XXint\displaystyle\textstyle{#1}}%
   {\XXint\textstyle\scriptstyle{#1}}%
   {\XXint\scriptstyle\scriptscriptstyle{#1}}%
   {\XXint\scriptscriptstyle\scriptscriptstyle{#1}}%
   \!\int}
\def\XXint#1#2#3{{\setbox0=\hbox{$#1{#2#3}{\int}$}
     \vcenter{\hbox{$#2#3$}}\kern-.5\wd0}}
\def\dashint{\Xint-}
\numberwithin{table}{subsection}
\title{Closed-Form Exact Inverses of the Weakly Singular and Hypersingular Operators On Disks}
\author{Ralf Hiptmair\thanks{Seminar for Applied Mathematics, ETH Zurich,
              Raemistrasse 101, 8092 Zurich, Switzerland.
              { \tt ralf.hiptmair@sam.math.ethz.ch}.  } 
        \and   Carlos Jerez-Hanckes\thanks{School of Engineering, Pontificia Universidad Cat\'olica de Chile,
              Av. Vicu\~na Mackenna 4860, Macul, Santiago, Chile.
              { \tt cjerez@ing.puc.cl}. This work was partially funded by Chile CORFO Engineering 2030 program 
              through grant OPEN-UC 201603, Conicyt Anillo ACT1417.} 
        \and Carolina Urz\'ua-Torres\thanks{Seminar for Applied Mathematics, ETH Zurich,
              Raemistrasse 101, 8092 Zurich, Switzerland.
              { \tt carolina.urzua@sam.math.ethz.ch}. }.
            The work of this author was supported by ETHIRA grant ETH-04 13-2.
              } 
\begin{document}
\maketitle

\begin{abstract}
We introduce new boundary integral operators which are the exact inverses of 
the weakly singular and hypersingular operators for $-\Delta$ on flat disks. 
Moreover, we provide explicit closed forms for them and prove the continuity 
and ellipticity of their corresponding bilinear forms in the natural Sobolev 
trace spaces. This permit us to derive new Calder\'on-type identities that can
provide the foundation for optimal operator preconditioning in Galerkin boundary 
element methods. 
\end{abstract}

\section{Introduction}
We study the weakly singular and hypersingular Boundary Integral Operators (BIOs) 
arising when solving screen problems in $\R^3$ via Boundary Integral Equations 
(BIEs). In particular, we consider the following singular BIEs on the flat disk 
$\D_a$ of radius $a>0$:
\begin{eqnarray}
\label{eq:IE1}
 (\OV \sigma) (\Vy) &:=&\frac{1}{4 \pi} \int_{\D_a} 
 \frac{\sigma (\Vx) }{ \norm{\Vx - \Vy}{}} d\D_a(\Vx) = g(\Vy), \\
\label{eq:IE2}
   (\OW u) (\Vy) &:=&\frac{1}{4 \pi} \dashint_{\D_a} u(\Vx) 
   \frac{\partial^2 }{\partial n_x \partial n_y}\frac{1}{ \norm{\Vx - \Vy}{}} 
   d\D_a(\Vx) = \mu(\Vy),
\end{eqnarray}
for $\Vy\in\D_a$ and functions $\sigma, g, u, \mu$ in suitable trace Sobolev 
spaces that will be made explicit later on. Here $\norm{\cdot}{}$ stands for the 
standard Euclidean norm and $\partial/\partial n$ for the normal derivative in a 
direction perpendicular to the disk $\D_a$. 

Equations \eqref{eq:IE1} and \eqref{eq:IE2} are connected with the following 
exterior boundary value problem for the Laplacian $-\Delta$ 
\cite{MCL00,STE08,SAS10}: find $U\in H^1_\text{loc}(\R^3\setminus\overline{\D}_a)$ such 
that 
\begin{equation}
\label{eq:bvp}
 \begin{cases}
 -\Delta U = 0 &\text{in }\Omega_a :=\R^{3}\setminus\overline{\D}_a\:, 
  \\ 
 U = g \quad \text{ or } \quad \dfrac{\partial U}{\partial n} = \mu
  &\text{on }\D_a\;,  \\[0.2cm]  U(\Vx) = \mathcal{O} 
 (\Vert \Vx^{-1} \Vert) \: &\text{as} \: \Vert \Vx \Vert \rightarrow \infty,
  \end{cases}
\end{equation}
The derivation of BIEs relies on a fundamental solution and Green's third identity, 
which yields the so-called \emph{integral representation}. The latter allows to 
reconstruct the solution $U$ over the entire domain $\Omega_a$ from boundary data 
$(U_{\vert \D_a}, \frac{\partial U}{\partial n}_{\vert \D_a})$ via \emph{single 
and double layer potentials}. When taking traces, we arrive at BIEs
for the trace jumps across the disk, \eqref{eq:IE1} for $\sigma := 
\left[  \frac{\partial U}{\partial n} \right]_{\D_a}$, and \eqref{eq:IE2} for $u := \left[ U \right]_{\D_a}$, 
in the case of the exterior Dirichlet or Neumann problem respectively. Usually 
\eqref{eq:IE1} is referred to as \emph{weakly singular} BIE, while \eqref{eq:IE2} is 
called \emph{hypersingular}.

This remains true when replacing $\D_a$ with a connected orientable Lipschitz 
manifold with boundary of co-dimension one, a so-called screen $\Gamma\subset\R^{3}
$. A comprehensive theory of the arising BIOs in the framework of Sobolev spaces 
on screens is available  \cite{STE86, STE87, SAS10, BUC03}. 

In \eqref{eq:IE1} and \eqref{eq:IE2}, $\D_a$ may also be replaced with the unit 
2-sphere $\S$. Then solutions $\sigma$ and $u$ will supply the missing boundary 
data for Dirichlet and Neumann boundary value problems for $- \Delta$ in the 
exterior of the unit ball. Moreover, a remarkable Calder\'on identity will 
hold: On $\S$ the BIOs occurring in \eqref{eq:IE1} and \eqref{eq:IE2} regarded as 
linear mappings between the trace spaces $H^{-1/2}(\S)$ and $H^{1/2}(\S)$, (after 
scaling) turn out to be inverses of each other up to a compact perturbation. 
More precisely, on the $2$-sphere we have $\OV\OW=\OW\OV=\frac{1}{4}(\OV^2-\Id)$ 
\cite[Eq.~(3.2.33)]{NED01}. Moreover, 
Dirichlet and Neumann trace spaces, $H^{1/2}(\S)$ and $H^{-1/2}(\S)$, are dual 
to each other.



However, the situation on screens differs from the case of closed surfaces. Indeed, instead of working with the standard Sobolev trace 
spaces as described above, one must use the spaces $\wH^{1/2}(\Gamma)$ and $\wH^{
-1/2}(\Gamma)$ --also denoted as $H^{\pm1/2}_{00}(\Gamma)$ \cite{LIM72}. In this 
setting, Calder\'on identities break down in the case of screens since
the mapping properties of the weakly singular and hypersingular operators 
degenerate, and the double layer operator and its adjoint vanish \cite{STE87}. 
The very same situation is encountered in two dimensions where the role of $
\D_a$ is played by a straight line segment. In 2D, one of the authors jointly 
with J.-C.~N\'ed\'elec managed to find the exact variational inverses of the 
weakly singular and hypersingular BIO established in \cite{JHN10} for the line 
segment. Their applicability as explicit Calder\'on-type preconditioners for 
open arcs was later shown in \cite{HJU14}. This breakthrough in 2D was the 
starting point to find the closed form of the BIOs that satisfy relations 
analogous to the ones established in \cite{JHN10} and find Calder\'on-type 
identities over the unit disk.

The key contribution of this article is to finally state these identities and 
to provide an explicit construction of the exact inverses of the hypersingular 
and the weakly singular BIOs on the unit disk. We call these inverse BIOs 
\emph{modified weakly singular} and \emph{modified hypersingular operators}, 
respectively, because their kernels feature the same structure as those of the 
standards BIOs but incorporate a smooth cut-off function that depends on the 
distance to the boundary of the disk $\partial\D_a$.

In addition, the modified hypersingular operator is given both in terms of a 
finite part integral operator with a special kernel and in terms of a 
variational (weak) form in suitable trace spaces. It turns out that this 
variational form is related to the modified weakly singular operator (introduced 
in Section~\ref{sec:mOVBIO} and \cite{HJU16}) in exactly the same way as the 
weak form of the standard hypersingular operator can be expressed through that 
for the weakly singular operator \cite[Thm.~6.17]{STE08}.

Instrumental in the derivation of this variational form of the inverse of the 
weakly singular operator have been recent yet unpublished results 
by J.-C.N\'ed\'elec \cite{NED13, NER17} also elaborated in the PhD thesis of 
P.~Ramaciotti \cite{RAM16} and in \cite{NER17}. They use so-called projected 
spherical harmonics in order to state series expansions for the kernels of the 
boundary integral operators and their inverses on the disk. We make use of such 
relations and prove them in a different way as will be shown in 
Section~\ref{sec:series}.


\section{Preliminaries}
\label{sec:prem}

\subsection{Notation}
Let $d=1,2,3$. For a bounded domain $K\subseteq\R^{d}$, $C^{m}(K)$, $m\in\N_0$, 
denotes the space of $m$-times differentiable scalar functions on $K$, and, 
similarly, for the space of infinitely differentiable, scalar continuous 
functions we write $C^{\infty}(K)$. Let $L^{p}(K)$ designate the class of $p
$-integrable functions over $K$. Dual spaces are defined in standard fashion 
with duality products denoted by angular brackets $\dual{\cdot}{\cdot}_{K}$.

Let $\mathcal{O} \in \R^d, \, d=2,3$ be open and $s\in \R$. We denote standard 
Sobolev spaces by $H^s(\mathcal{O})$. For positive $s$ and $\mathcal{O}$ 
Lipschitz such that $\mathcal{O} \Subset \tilde{\mathcal{O}}$ for $\tilde{
\mathcal{O}}\in \R^d$ closed, let $\wH^s(\mathcal{O})$ be the space of functions 
whose extension by zero to $\tilde{\mathcal{O}}$ belongs to $H^s(\tilde{
\mathcal{O}})$, as in \cite{JHN10}. In particular, the following duality 
relations hold
\begin{align}
 \wH^{-1/2}(\mathcal{O}) \equiv \left( H^{1/2}(\mathcal{O}) \right)^\prime 
 \qquad\text{and}\qquad  H^{-1/2}(\mathcal{O}) 
 \equiv \left( \wH^{1/2}(\mathcal{O}) \right)^\prime.
\end{align}

Finite part integrals with distributional meaning as in \cite{MCL00} are labeled 
with a dash as in $\dashint$.

\subsection{Geometry}
We focus on the circular disk $\D_a$ with radius $a>0$, defined as $\D_{a}:=\{
\Vx\in\R^{3}: x_{3}=0\text{ and }\norm{\Vx}{}< a \}$. Thus, the volume complement 
domain becomes $\Omega_{a}:=\R^3\setminus\overline{\D}_{a}$. Often, we will omit 
the third coordinate and use the following short polar coordinate notation: $\Vx 
= (r_x \cos \theta_x, r_x \sin \theta_x) \in \D_a$.

\subsection{Variational BIEs on the Disk $\D_a$}
\label{sec:VBIE}

\subsubsection{Weakly Singular Integral Equation}

As in \eqref{eq:IE1}, we consider the following singular integral equation: for 
$g\in H^{1/2}(\D_a)$, we seek a function $\sigma \in \wH^{-1/2}(\D_{a})$ such 
that 
\begin{equation}
 \label{eq:VIE}
 (\OV \sigma ) (\Vy) := \frac{1}{4 \pi} \int_{\D_a} 
 \frac{\sigma (\Vx) }{ \norm{\Vx - \Vy}{}} d\D_a(\Vx) = g(\Vy), \qquad \Vy\in\D_a. 
\end{equation}
The measure $d\D_a(\Vx) $ denotes the surface element in terms of 
$\Vx\in\D_a$, equal to $r_xdr_xd\theta_x$, and 
the unknown $\sigma$ is the jump of the Neumann trace of 
the solution $U$ of the exterior Dirichlet problem in \eqref{eq:bvp}. 

The symmetric variational formulation for \eqref{eq:VIE} is: find 
$\sigma \in \wH^{-1/2}(\D_a)$ such that for $g\in H^{1/2}(\bar{\D
}_a)$, it holds
\begin{equation}\label{eq:bilinV}
 \dual{\OV \sigma }{\psi}_{\D_a} = \frac{1}{4\pi} \int_{\D_a}\int_{\D_a} 
 \frac{\sigma(\Vx)\psi(\Vy)}{\norm{\Vx-\Vy}{}} d\D_a(\Vx) d\D_a(\Vy) = \dual{g
 }{\psi}_{\D_a},  
\end{equation} 
for all $\psi \in \wH^{-1/2}(\D_a)$ \cite[Sect.~3.5.3]{SAS10}.

\subsubsection{Hypersingular Integral Equation}

As in \eqref{eq:IE1}, we consider the following singular integral equation: for 
$\mu \in H^{-1/2}(\D_a)$, we seek a function $u\in\wH^{1/2}(\D_a)$ such that 
  \begin{equation}
    \label{eq:WIE}
   (\OW u) (\Vy) = \frac{1}{4 \pi} \dashint_{\D_a} u(\Vx) 
   \frac{\partial^2 }{\partial n_x \partial n_y}\frac{1}{ \norm{\Vx - \Vy}{}} 
   d\D_a(\Vx) = \mu(\Vy), \qquad \Vy \in \D_a, 
  \end{equation}
where the unknown $u$ is the jump of the Dirichlet trace of the solution $U$
of the exterior Neumann problem \eqref{eq:bvp} \cite[Sect.~3.5.3]{SAS10}. 

Let $v$ be a continuously differentiable function over $\D_a$, and let 
$\tilde{v}$ be an appropriate smooth extension of $v$ into a three-dimensional 
neighborhood of $\D_a$. Let us introduce the \emph{vectorial} surfacic curl 
operator \cite[p.133]{STE08} as 
\begin{equation}
 \curl_{\D_a} v (\Vx) := \Vn (\Vx) \times \nabla \tilde{v}(\Vx),
\end{equation}
with $\Vn (\Vx)$ being the outer normal of $\D_a$ in $\Vx \in \D_a$, and 
$\nabla$ denoting the standard gradient. 

\begin{prop}
A symmetric variational formulation for \eqref{eq:WIE} is given by: given $\mu
\in H^{-1/2}(\D_a)$, seek $u\in\wH^{1/2}(\D_a)$ such that 
\begin{equation}\label{eq:bilinW}
 \dual{\OW u}{v}_{\D_a} := 
 \frac{1}{4 \pi} \int_{\D_a} \int_{\D_a} \frac{\curl_{\D_a}u(\Vy)\cdot 
 \curl_{\D_a}v(\Vx)}{\norm{\Vx - \Vy}{}} 
 d\D_a(\Vx) d\D_a(\Vy) = \dual{\mu}{v}_{\D_a}
\end{equation} 
for all $v \in \wH^{1/2}(\D_a)$.
\end{prop}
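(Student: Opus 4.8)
The plan is to derive \eqref{eq:bilinW} from \eqref{eq:WIE} by the classical integration-by-parts argument of Maue and N\'ed\'elec that underlies \cite[Thm.~6.17]{STE08}, which trades the Hadamard finite-part kernel $\frac{\partial^2}{\partial n_x\partial n_y}\norm{\Vx-\Vy}{}^{-1}$ for the weakly singular kernel of \eqref{eq:bilinV} acting on surface curls. Testing \eqref{eq:WIE} with $v\in\wH^{1/2}(\D_a)$ turns the left-hand side into the double surface integral
\begin{equation*}
 \dual{\OW u}{v}_{\D_a}=\frac{1}{4\pi}\int_{\D_a}v(\Vy)\dashint_{\D_a}u(\Vx)\,\frac{\partial^2}{\partial n_x\partial n_y}\frac{1}{\norm{\Vx-\Vy}{}}\,d\D_a(\Vx)\,d\D_a(\Vy),
\end{equation*}
and the aim is to recast this as $\dual{\OV(\curl_{\D_a}u)}{\curl_{\D_a}v}_{\D_a}$, i.e.\ the weakly singular form \eqref{eq:bilinV} applied componentwise to the tangential fields $\curl_{\D_a}u$ and $\curl_{\D_a}v$.

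The core is a vector-calculus identity for the Laplace kernel $G(\Vx,\Vy)=\frac{1}{4\pi}\norm{\Vx-\Vy}{}^{-1}$. Using $\nabla_\Vy G=-\nabla_\Vx G$ and the definition $\curl_{\D_a}w=\Vn\times\nabla\tilde w$, I would integrate by parts twice over $\D_a$ --- once in $\Vx$ and once in $\Vy$ --- each time shifting a tangential derivative off the kernel and onto $u$, resp.\ $v$. The relevant surface Stokes formula is
\begin{equation*}
 \int_{\D_a}\curl_{\D_a}w(\Vx)\cdot\Vw(\Vx)\,d\D_a(\Vx)=\int_{\D_a}w(\Vx)\,\mathrm{curl}_{\D_a}\Vw(\Vx)\,d\D_a(\Vx),
\end{equation*}
where $\mathrm{curl}_{\D_a}$ denotes the scalar surface curl adjoint to $\curl_{\D_a}$, and the boundary integral over $\partial\D_a$ drops out whenever $w$ is compactly supported in the open disk. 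Performing both integrations by parts simultaneously removes the non-integrable $\norm{\Vx-\Vy}{}^{-3}$ behaviour --- thereby dissolving the finite-part regularization --- and produces the weakly singular kernel $\norm{\Vx-\Vy}{}^{-1}$ paired with $\curl_{\D_a}u(\Vy)\cdot\curl_{\D_a}v(\Vx)$, which is exactly \eqref{eq:bilinW}. On the flat disk this is especially clean, since the unit normal $\Vn\equiv(0,0,1)$ is constant, all its derivatives vanish, and $\curl_{\D_a}$ is merely the $90^\circ$-rotated surface gradient.

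The step I expect to be the main obstacle is making these two integrations by parts rigorous in the presence of the finite-part singularity and, above all, discarding the boundary contributions on $\partial\D_a$. I would handle both by a density argument: establish the identity first for $u,v\in C^\infty_0(\D_a)$, where the Stokes boundary terms vanish identically and the finite-part manipulations are legitimate, and then pass to general $u,v\in\wH^{1/2}(\D_a)$ by continuity. This is precisely where the space $\wH^{1/2}(\D_a)$ --- functions whose zero extension lies in $H^{1/2}$ --- is indispensable rather than $H^{1/2}(\D_a)$: it provides the density of $C^\infty_0(\D_a)$ and guarantees that no spurious boundary terms survive the limit. The continuity needed for the passage follows because $\curl_{\D_a}$ maps $\wH^{1/2}(\D_a)$ boundedly into $\wH^{-1/2}(\D_a)$ and $\OV$ is continuous from $\wH^{-1/2}(\D_a)$ to $H^{1/2}(\D_a)$, so the right-hand bilinear form of \eqref{eq:bilinW} is bounded on $\wH^{1/2}(\D_a)\times\wH^{1/2}(\D_a)$; its symmetry is then manifest from the expression.
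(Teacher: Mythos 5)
Your proposal is correct and follows essentially the same route as the paper, which likewise invokes the Maue-type integration by parts of \cite[Thm.~6.17]{STE08} and observes that for $u,v\in\wH^{1/2}(\D_a)$ the boundary terms on $\partial\D_a$ vanish so that the kernel identity \cite[Lemma~6.16]{STE08} carries over to the open disk. You simply spell out more explicitly what the paper compresses into two sentences --- the double integration by parts, the density of $C^\infty_0(\D_a)$ in $\wH^{1/2}(\D_a)$, and the continuity estimate justifying the limit --- all of which is consistent with the paper's argument.
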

\begin{proof}
 The proof follows the same steps as \cite[Thm.~6.17]{STE08} for closed surfaces. 
 Since $u, v \in \wH^{1/2}(\D_a)$, when integrating by parts, 
 the boundary term vanishes and \cite[Lemma~6.16]{STE08} still holds.
\end{proof}

 Existence and uniqueness of solution of problems \eqref{eq:bilinV} and 
 \eqref{eq:bilinW} was proved by Stephan in \cite[Thm.~2.7]{STE87}. Moreover, 
 for a screen $\Gamma$, the bilinear forms in \eqref{eq:bilinV} and 
 \eqref{eq:bilinW} are continuos and 
 elliptic in $\wH^{-1/2}(\Gamma)$ and $\wH^{1/2}(\Gamma)$, respectively 
 (\emph{cf.} \cite[Thm.~3.5.9]{SAS10}). One can show that in these cases and 
 for sufficiently smooth screens $\Gamma$, when approaching the edges 
 $\partial\Gamma$, the solutions decay like the square-root of 
 the distance to $\partial\Gamma$ \cite{CDD03}.

\section{New Boundary Integral Operators}
\subsection{Modified Weakly Singular Integral Operator}
\label{sec:mOVBIO}
We define the \emph{modified} weakly singular operator as the improper integral
  \begin{equation}
    \label{eq:mV}
    (\mOV \upsilon) (\Vx) := -\frac{2}{\pi^2} \int_{\D_a} \upsilon(\Vy)
\frac{S_a(\Vx, \Vy)}{\norm{\Vx - \Vy}{}} d\D_a(\Vy), \quad \Vx\in\D_a,
  \end{equation}
with the bounded function on $\D_a\times\D_a$
\begin{equation}
  \label{eq:Sfunction}
  S_a(\Vx,\Vy) := \tan^{-1} \left( \frac{\sqrt{a^2-r_x^2 
  \vphantom{r_y^2}}\sqrt{a^2-r_y^2}}{a \norm{\Vx-\Vy}{}} \right), \quad \Vx \neq \Vy.
\end{equation}
We remark that the standard weakly singular BIO (as defined in \eqref{eq:VIE}) 
is given by \eqref{eq:mV} without $S_a(\Vx, \Vy)$ and scaled by $\frac{\pi}{2}$. 
Furthermore, since $\lim_{\Vx \rightarrow \Vy}S_a(\Vx, \Vy)=\frac{\pi}{2}$ 
when $\Vx,\,\Vy\in\D_{a}$, the kernels of $\mOV$ and $\OV$ have 
the same weakly singular behavior in (the interior of) $\D_a$. Also note that 
$S_a(\Vx,\Vy) = 0$ if $\abs{\Vx}=a$ or $\abs{\Vy}=a$. As a consequence, $S_a$, though 
bounded, will be discontinuous on $\partial \D_a \times \partial \D_a$.

\subsection{Modified Hypersingular Singular Integral Operator}
\label{sec:mOWBIO}

We define the \emph{modified} hypersingular operator $\mOW$ through the finite 
part integral
\begin{equation}
 \label{eq:mW}
 (\mOW g)(\Vx):= -\frac{2}{\pi^2}\dashint_{\D_a} g(\Vy)K_{\mOW}(\Vx,\Vy)d\D_a(\Vy),
 \quad \Vx \in \D_a,
\end{equation}
with
\begin{equation}
 \label{eq:mWkernel}
 K_{\mOW}(\Vx,\Vy):=\frac{a}{\norm{\Vx-\Vy}{}^2 \sqrt{\smash[b]{a^2-r_x^2}}
 \sqrt{\smash[b]{a^2-r_y^2}}} + \frac{S_a(\Vx,\Vy)}{\norm{\Vx-\Vy}{}^3},
 \quad \Vx\neq\Vy.
\end{equation}

Since $\lim_{\Vx \rightarrow \Vy} S_a(\Vx, \Vy) = \frac{\pi}{2}$ when $\Vx,
\,\Vy\in\D_a$, the kernel of the standard hypersingular operator $\OW$ (defined 
as in \eqref{eq:WIE}) and the second term in \eqref{eq:mWkernel} have the same 
$\mathcal{O}(\norm{\Vx-\Vy}{}^{-3})$-type singularity in the interior of $\D_a$. 
On the other hand, the first term in \eqref{eq:mWkernel} represents a strongly 
singular kernel in the interior of $\D_a$ and features a hypersingular behavior 
when $\Vx=\Vy\in\partial\D_a$. From these observations we point out that $\mOW$ 
has a truly hypersingular kernel.

\subsection{Calder\'on-type Identities}

The next fundamental result reveals why we are interested in these exotic looking 
integral operators (\emph{cf}.~\cite[Prop.~3.6]{JHN10} or \cite[Thm.~2.2]{HJU14}).\\
\newline
\noindent\fbox{\parbox{0.975\textwidth}{%
\begin{thm}\label{thm:caldId}
The following identities hold:
\begin{align}
 \mOW\OV = \Id_{\wH^{-1/2}(\D_a)}\label{eq:invVIE}, \qquad &\OV\mOW = \Id_{H^{
 1/2}(\D_a)},\\
 \mOV\OW = \Id_{\wH^{1/2}(\D_a)}\label{eq:invWIE}, \qquad &\OW\mOV = \Id_{H^{
 -1/2}(\D_a)}.
\end{align}
\end{thm}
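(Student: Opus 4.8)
The plan is to prove all four identities at once by \emph{spectral diagonalization}: I would construct a single family of special functions on $\D_a$ that simultaneously diagonalizes $\OV$, $\OW$, $\mOV$ and $\mOW$, so that each composition collapses to a product of reciprocal scalars. The natural device is the vertical lift of the disk onto the upper hemisphere of radius $a$, $\Vx\mapsto(\Vx,\sqrt{a^2-r_x^2})$, under which the pullbacks of the spherical harmonics $Y_n^m$ become the \emph{projected spherical harmonics} on $\D_a$. This lift is exactly what produces the weights $\sqrt{a^2-r_x^2}\,\sqrt{a^2-r_y^2}$ in $S_a$ and in $K_{\mOW}$, and it is the mechanism behind the series expansions announced for Section~\ref{sec:series}. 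From this one construction I would extract four complete orthogonal systems adapted to the four trace spaces: edge-singular families $\{\phi_n^m\}\subset\wH^{-1/2}(\D_a)$ and $\{\xi_n^m\}\subset H^{-1/2}(\D_a)$ on the Neumann side, and smooth/edge-vanishing families $\{\psi_n^m\}\subset H^{1/2}(\D_a)$ and $\{\chi_n^m\}\subset\wH^{1/2}(\D_a)$ on the Dirichlet side.

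First I would establish, in Section~\ref{sec:series}, the diagonalization relations $\OV\phi_n^m=\lambda_n\psi_n^m$, $\OW\chi_n^m=\mu_n\xi_n^m$, $\mOW\psi_n^m=\lambda_n^{-1}\phi_n^m$ and $\mOV\xi_n^m=\mu_n^{-1}\chi_n^m$, for explicit positive scalar sequences $\lambda_n,\mu_n$. Concretely, one expands the four kernels --- $\norm{\Vx-\Vy}{}^{-1}$, the modified kernel $S_a(\Vx,\Vy)/\norm{\Vx-\Vy}{}$, and the two hypersingular kernels --- in the projected spherical harmonics and reads off the coefficients. The decisive point is the \emph{exact reciprocity} of the eigenvalues of a standard operator and of its modified counterpart; this is precisely where the normalizing constants $-2/\pi^2$ and the inverse-tangent structure of $S_a$ chosen in \eqref{eq:mV} and \eqref{eq:mW} are forced.

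Granting these relations, the theorem follows termwise: for each index, $\mOW\OV\phi_n^m=\mOW(\lambda_n\psi_n^m)=\phi_n^m$, and symmetrically $\OV\mOW\psi_n^m=\psi_n^m$, $\mOV\OW\chi_n^m=\chi_n^m$ and $\OW\mOV\xi_n^m=\xi_n^m$. Since $\OV$ and $\OW$ are continuous and elliptic, hence isomorphisms between the stated trace spaces (\cite[Thm.~2.7]{STE87}, \cite[Thm.~3.5.9]{SAS10}, together with the variational formulations \eqref{eq:bilinV} and \eqref{eq:bilinW}), and since each system is complete in its space, a density-and-continuity argument lifts the four termwise identities to operator identities on all of $\wH^{\pm1/2}(\D_a)$ and $H^{\pm1/2}(\D_a)$. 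I note a shortcut worth exploiting: once $\OV$ (respectively $\OW$) is known to be an isomorphism, it is enough to verify one composition per pair on the dense span --- say $\mOW\OV=\Id$ and $\mOV\OW=\Id$ --- and the reversed identities $\OV\mOW=\Id$ and $\OW\mOV=\Id$ follow for free.

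I expect the entire difficulty to reside in Section~\ref{sec:series}. Proving the kernel expansions and justifying the termwise action of the operators is delicate because the kernels range from weakly singular through strongly singular to genuinely hypersingular, so exchanging summation with the finite-part integrals defining $\OW$ and $\mOW$ must be done with care, all the more because $S_a$ is discontinuous on $\partial\D_a\times\partial\D_a$. A second, more structural obstacle is to confirm that the families $\{\phi_n^m\}$, $\{\chi_n^m\}$ and their Neumann-side counterparts truly belong to --- and are complete in --- the nonstandard spaces $\wH^{\pm1/2}(\D_a)$, these being exactly the spaces in which the classical closed-surface Calder\'on identities degenerate.
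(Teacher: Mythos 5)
Your proposal is sound in outline, but it takes a genuinely different route from the paper. The paper does \emph{not} prove Theorem~\ref{thm:caldId} by diagonalization: it proceeds by closed-form inversion, invoking the Li--Rong solution formula (Theorem~\ref{thm:inverse}, \cite{LIR02}) for equations with kernel $\norm{\Vx-\Vy}{}^{-\alpha}$ --- itself resting on the separation of variables through the Poisson kernel $\Op$ of Lemma~\ref{lemm:Rrepresentation} and iterated Abel equations --- and then evaluating the resulting one-dimensional finite-part integrals exactly via Fabrikant's change of variable (Lemmas~\ref{lem:atanformula} and~\ref{lem:atanformula2}, \cite{FAB91}). This recovers precisely the kernels \eqref{eq:mV} and \eqref{eq:mWkernel}, yields $\sigma=\mOW g$ and $u=\mOV\mu$ for continuously differentiable data (Propositions~\ref{prop:Vinvcont} and~\ref{prop:Winvcont}), and the four identities then follow by density of $C^\infty(\overline{\D}_a)$. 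Your spectral plan is essentially the N\'ed\'elec--Ramaciotti program \cite{NED13,NER17,RAM16}, which the paper deploys only later, in Section~\ref{sec:series}, to obtain the variational form of $\mOW$. What each approach buys: the closed-form route needs no completeness theory in the nonstandard spaces $\wH^{\pm1/2}(\D_a)$ and produces the explicit kernels as a byproduct of the inversion; your route additionally delivers the full symbol calculus (the eigenvalues $\lambda_l^m$ and the even/odd parity structure) and makes continuity and ellipticity transparent, at the price of heavier special-function work.

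Two caveats if you execute your plan. First, beware circularity: in the paper the eigenrelations for the modified operators --- \eqref{eq:mOWevp} and the expansions \eqref{eq:KmV}, \eqref{eq:KmW} --- are \emph{derived from} $\mOW=\OV^{-1}$ and $\mOV=\OW^{-1}$, i.e.\ from the very theorem you are proving; so you must establish the projected-spherical-harmonic expansions of $S_a(\Vx,\Vy)/\norm{\Vx-\Vy}{}$ and of $K_{\mOW}$ directly from the definitions \eqref{eq:mV} and \eqref{eq:mWkernel}, as in \cite{NER17}, which is exactly where you correctly locate the difficulty. Second, lifting the termwise identities from the span of the eigenfunctions to operator identities on all of $\wH^{\pm1/2}(\D_a)$ and $H^{\pm1/2}(\D_a)$ tacitly requires a priori boundedness of $\mOV$ and $\mOW$ between the stated spaces (or, alternatively, a coefficient-space characterization of the four trace norms via the eigenvalue asymptotics); note that the paper establishes continuity of $\mOV$ and $\mOW$ only \emph{after} the theorem, by a contradiction argument that uses \eqref{eq:invWIE} together with Stephan's isomorphism property \cite{STE87}, so you cannot simply cite it at this stage. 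Your shortcut is correct, though: once $\OV$ and $\OW$ are known isomorphisms, verifying one composition per pair on a dense subspace suffices, since a bounded left inverse of a surjective isomorphism is automatically a two-sided inverse.
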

\mbox{$$}}}
\vspace{0.25cm}

Key tools for the proof of Theorem \ref{thm:caldId} are some auxiliary results 
by Li and Rong \cite{LIR02}. First, define the function $\Op(\rho,\theta)$ as
\begin{equation}
  \label{eq:p}
  \Op(\rho, \theta) := \frac{1}{2 \pi} \sum_{n = -\infty}^{\infty}
  \rho^{\abs{n}}\mathrm{e}^{in\theta} = \frac{1}{2 \pi}
  \frac{1-\rho^2}{1+\rho^2-2\rho\cos\theta}, \quad \forall\;\abs{\rho} < 1,
\end{equation}
with $\theta \in [0, 2\pi]$ (\emph{cf.}~\cite[Chap.~1.1]{FAB91}). 
%

This function allows us to rewrite the kernel of $\mOV$ as will be shown in the following
Lemma.
\begin{lemma}[Lemma 1 \cite{LIR02}]
\label{lemm:Rrepresentation}
Let us consider points $\Vx$, $\Vy$ on the disk $\D_a$, satisfying $\Vx \neq \Vy$, whose 
polar coordinates are given by $\Vx=(r_x\cos\theta_x, r_x\sin\theta_x)\in \D_a$ and 
equivalently for $\Vy$. Then, for a parameter $\alpha \in (0,4)$, such that $\alpha 
\neq 2$, it holds
\begin{equation}
\begin{split}
 \frac{1}{4 \pi}\frac{1}{\norm{\Vx - \Vy}{}^\alpha} 
  & = \frac{1}{\pi} \sin{\frac{\alpha \pi}{2}} \dashint_0^{\min(r_x,r_y)}\!\!\!
  \frac{s^{\alpha-1}\Op\left(\frac{s^2}{r_xr_y}, \theta_x - \theta_y\right)}{
  (r_x^2-s^2)^{\alpha/2}(r_y^2-s^2)^{\alpha/2}} ds\\
 \label{eq:Rmax} & = { \frac{1}{\pi}} \sin{\frac{\alpha \pi}{2}}
  \dashint_{\max(r_x,r_y)}^\infty \frac{s^{\alpha-1} \Op\left(\frac{r_xr_y}{s^2}, 
  \theta_x-\theta_y\right)}{(s^2-r_x^2)^{\alpha/2}(s^2-r_y^2)^{\alpha/2}} ds.
\end{split}
\end{equation}
Here the integrals above are understood in the sense of finite-part
integrals if $\alpha>2$.
\end{lemma}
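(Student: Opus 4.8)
The plan is to prove both representations by expanding each side of \eqref{eq:Rmax} in a Fourier series in the angular difference $\phi := \theta_x - \theta_y$ and matching coefficients. Both sides depend on $\theta_x,\theta_y$ only through $\phi$ and are $2\pi$-periodic and even, so it suffices to identify the coefficient of $\mathrm{e}^{in\phi}$ for each $n\in\Z$. Writing $\norm{\Vx-\Vy}{}^2 = r_x^2 + r_y^2 - 2r_xr_y\cos\phi$ and assuming without loss of generality that $r_y < r_x$ (the case $r_x = r_y$, for which necessarily $\phi\neq 0$, follows by continuity), I would factor out $r_x^{-\alpha}$ and set $t := r_y/r_x \in (0,1)$, so that the left-hand side reads $\tfrac{1}{4\pi}r_x^{-\alpha}(1-2t\cos\phi + t^2)^{-\alpha/2}$. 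On the right-hand side the series \eqref{eq:p} for $\Op$ converges uniformly over the integration range, because its first argument $s^2/(r_xr_y)$ stays in $[0,t]$ with $t<1$; this legitimises exchanging summation and integration, the only genuine singularity of the integrand being the endpoint factor $(r_y^2-s^2)^{-\alpha/2}$.

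With $m := \abs{n}$, the coefficient of $\mathrm{e}^{in\phi}$ on the right-hand side then equals $\tfrac{1}{2\pi^2}\sin(\tfrac{\alpha\pi}{2})\,I_m$, where $I_m := \dashint_0^{r_y} s^{\alpha-1}\bigl(\tfrac{s^2}{r_xr_y}\bigr)^m (r_x^2-s^2)^{-\alpha/2}(r_y^2-s^2)^{-\alpha/2}\,ds$. I would evaluate $I_m$ through the substitution $s^2 = r_y^2 v$, which recasts it as Euler's integral representation of the Gauss hypergeometric function and yields $I_m = \tfrac12\, t^m r_x^{-\alpha}\, B\!\bigl(\tfrac\alpha2 + m,\, 1 - \tfrac\alpha2\bigr)\,{}_2F_1\!\bigl(\tfrac\alpha2,\, \tfrac\alpha2 + m;\, m+1;\, t^2\bigr)$. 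Invoking the reflection formula $\Gamma(\tfrac\alpha2)\Gamma(1-\tfrac\alpha2) = \pi/\sin(\tfrac{\alpha\pi}{2})$ to absorb the trigonometric prefactor, the right-hand coefficient collapses to $\tfrac{1}{4\pi}\, r_x^{-\alpha}\,\tfrac{(\alpha/2)_m}{m!}\, t^m\,{}_2F_1\!\bigl(\tfrac\alpha2,\tfrac\alpha2+m; m+1; t^2\bigr)$.

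It then remains to check that the coefficient of $\mathrm{e}^{in\phi}$ in $(1-2t\cos\phi+t^2)^{-\alpha/2}$ equals exactly $\tfrac{(\alpha/2)_m}{m!}\, t^m\,{}_2F_1\!\bigl(\tfrac\alpha2,\tfrac\alpha2+m; m+1; t^2\bigr)$. This is the classical Fourier expansion of the Gegenbauer generating function, which I would derive by writing $(1-2t\cos\phi+t^2)^{-\alpha/2} = (1-t\mathrm{e}^{i\phi})^{-\alpha/2}(1-t\mathrm{e}^{-i\phi})^{-\alpha/2}$, multiplying the two binomial series, and collecting the terms proportional to $\mathrm{e}^{in\phi}$, which reproduces precisely the hypergeometric series above. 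Matching the two coefficients establishes the first equality. The second equality in \eqref{eq:Rmax} I would then deduce from the first via the substitution $s \mapsto r_xr_y/s$, which maps $(0,\min(r_x,r_y))$ bijectively onto $(\max(r_x,r_y),\infty)$ and, after a short computation in which the powers of $r_x,r_y$ cancel, carries the integrand of the first representation into that of the second, in particular sending $\Op(s^2/(r_xr_y),\phi)$ to $\Op(r_xr_y/s^2,\phi)$.

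The hard part is the passage from the absolutely convergent regime $\alpha\in(0,2)$, where all the manipulations above are literally valid, to the finite-part regime $\alpha\in(2,4)$. There the endpoint factor $(r_y^2-s^2)^{-\alpha/2}$ makes the integral divergent and mandates the Hadamard finite-part reading, while Euler's formula itself presupposes $\Re(1-\tfrac\alpha2)>0$. I would resolve this by analytic continuation in $\alpha$: both sides of \eqref{eq:Rmax} are meromorphic in $\alpha$, the finite part agrees by construction with the analytic continuation of the convergent integral, and the hypergeometric identity persists by the identity theorem; hence the equality extends from $(0,2)$ to $(2,4)$, the value $\alpha=2$ being excluded because the representation degenerates there (the sine prefactor vanishes while $\Gamma(1-\tfrac\alpha2)$ blows up, reflecting the logarithmic kernel). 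The secondary technical point to watch is the rigorous justification of the termwise integration, which, as noted above, is guaranteed by the uniform convergence of the $\Op$-series away from the diagonal $r_x=r_y$.
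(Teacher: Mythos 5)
Your proposal is mathematically sound, but note the salient point of comparison: the paper does not prove this lemma at all --- it is imported verbatim as Lemma~1 of Li and Rong \cite{LIR02}, so there is no in-paper proof to measure against, and your argument amounts to a self-contained reconstruction of the cited result along classical lines. The skeleton checks out: expanding both sides in Fourier modes $e^{in\phi}$, evaluating the modal integral $I_m$ via $s^2 = r_y^2 v$ and Euler's integral for ${}_2F_1$, and cancelling the $\sin(\tfrac{\alpha\pi}{2})$ prefactor against $\Gamma(\tfrac{\alpha}{2})\Gamma(1-\tfrac{\alpha}{2})$ by reflection does produce $\tfrac{1}{4\pi}r_x^{-\alpha}\tfrac{(\alpha/2)_m}{m!}t^m\,{}_2F_1(\tfrac{\alpha}{2},\tfrac{\alpha}{2}+m;m+1;t^2)$, which is exactly the coefficient obtained from multiplying the two binomial series for $(1-te^{i\phi})^{-\alpha/2}(1-te^{-i\phi})^{-\alpha/2}$; and the substitution $s\mapsto r_xr_y/s$ indeed maps $(0,\min(r_x,r_y))$ onto $(\max(r_x,r_y),\infty)$ with all powers of $r_x,r_y$ cancelling and $\Op(s^2/(r_xr_y),\phi)$ passing to $\Op(r_xr_y/s^2,\phi)$, so the second representation follows from the first. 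The analytic continuation in $\alpha$ is also the right mechanism for the finite-part regime, since the Hadamard finite part coincides with the meromorphic continuation in the endpoint exponent and $\alpha/2\in(1,2)$ avoids the integer poles. Three small points deserve tightening, none fatal: (i) the coalescing case $r_x=r_y$, $\phi\neq 0$ is cleaner if you fix $r_x=r_y$ and continue in $\alpha$, since continuity of a finite-part integral as the regular factor $(r_x^2-s^2)^{-\alpha/2}$ degenerates into a second singular endpoint factor is more delicate than a one-line appeal; (ii) the compatibility of the finite-part prescriptions under $s\mapsto r_xr_y/s$ should either be verified or sidestepped by continuing each of the two representations in $\alpha$ separately from $(0,2)$; (iii) your parenthetical attributing the exclusion of $\alpha=2$ to a ``logarithmic kernel'' is off --- $\norm{\Vx-\Vy}{}^{-2}$ is not logarithmic; the genuine obstruction is that the endpoint exponent $\alpha/2$ hits an integer there, where the continued finite-part integral acquires a pole that the vanishing sine factor must compensate, so the representation degenerates. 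What your route buys, relative to the paper's bare citation, is an independent verification with explicit control of where the hypotheses $\alpha\in(0,4)$, $\alpha\neq 2$ enter (convergence of the Euler integral, location of the Gamma poles, integrability at $s=0$ and $s=\infty$).
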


We can now follow Fabrikant \cite[Chap.~1.1]{FAB91} and introduce
\begin{equation}
\begin{split}
L(\rho)u(r,\theta) &:= \int_0^{2\pi} \Op(\rho,\theta - \theta_0)u(r,\theta_0) 
d\theta_0 \\ &= \frac{1}{2 \pi} \sum_{n = -\infty}^{\infty} \rho^{\abs{n}}
\mathrm{e}^{in\theta} \int_0^{2\pi} \mathrm{e}^{-in\theta_0}u(r,\theta_0) d \theta_0, 
\end{split}
\end{equation}
with $\theta \in [0, 2\pi]$, and $r\in[0,a]$. This integral operator is sometimes called 
Poisson integral over the disk \cite[Chap.~1.1]{FAB91}. The properties of $L(\rho)
u(r,\theta)$ combined with the formulas from Lemma~\ref{lemm:Rrepresentation}
lead to a complete separation of variables. This fact plays a key role as the resulting 
expressions for $\OV$ and $\OW$ will be iterative systems of Abel integral equations, 
whose solutions are given in the next Theorem.
%
\begin{thm}[Thm.1-2 \cite{LIR02}] 
\label{thm:inverse}
 Let $\alpha\in\lbrace1,3\rbrace$ and $f\in C^1(\overline{\D}_a)$. Then, 
the solution of
\begin{equation}\label{eq:IE}
 \frac{1}{4 \pi} \int_{\D_a} \frac{ \varphi(\Vx)}{\norm{\Vx-\Vy}{}^{\alpha}} 
 d\D_a(\Vx) = f(\Vx), \qquad  \Vx\in\D_a,
\end{equation}
is given by
\begin{equation}\label{eq:IEsol}
 \varphi(\Vx) = \frac{1}{\pi}\dashint_{\D_a}\frac{f(\Vy)}{R_{\D}^{4-\alpha}}d\D_a(\Vy),                 
\end{equation} 
with
\begin{equation}
 \frac{1}{R_\D^{4-\alpha}} = 4\sin\left(\frac{\alpha\pi}{2}\right) \dashint_{\max(r_x, r_y)}^a
 \frac{s^{3-\alpha} \Op\left( \frac{r_xr_y}{s^2}, \theta_x-\theta_y \right)}{
 (s^2-r_x^2)^{(4-\alpha)/2}(s^2-r_y^2)^{(4-\alpha)/2}} ds.
\end{equation}
\end{thm}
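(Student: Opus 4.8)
The plan is to adopt the separation-of-variables technique of Fabrikant \cite{FAB91} that underlies \cite{LIR02}, turning the two-dimensional equation \eqref{eq:IE} into a decoupled family of one-dimensional Abel integral equations indexed by the angular frequency. First I would expand datum and unknown into angular Fourier series, $f(\Vy)=\sum_{n\in\Z}f_n(r_y)\mathrm{e}^{in\theta_y}$ and $\varphi(\Vx)=\sum_{n\in\Z}\varphi_n(r_x)\mathrm{e}^{in\theta_x}$, and substitute the internal representation of the kernel furnished by Lemma~\ref{lemm:Rrepresentation} (the integral from $0$ to $\min(r_x,r_y)$) into \eqref{eq:IE}. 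Since the Poisson integral $L(\rho)$ acts on the $n$-th mode as multiplication by $\rho^{\abs{n}}$ --- which is immediate from the series defining $\Op$ in \eqref{eq:p} --- the $\theta_y$-integration reduces $\Op$ to its $n$-th Fourier coefficient and collapses \eqref{eq:IE}, for each fixed $n$, to a one-dimensional radial equation relating $\varphi_n$ and $f_n$ through the kernel $s^{\alpha-1}(s^2/(r_xr_y))^{\abs{n}}(r_x^2-s^2)^{-\alpha/2}(r_y^2-s^2)^{-\alpha/2}$ integrated up to $\min(r_x,r_y)$.

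Next I would recognise each radial equation, after the substitution $\xi=r_x^2$, $\eta=r_y^2$, as built from one-sided Abel fractional-integral operators of order $\alpha/2$ in the squared radius: the coincidence of the two singular factors at the common upper limit $\min(r_x,r_y)$ makes the operator factor into an Abel operator and its adjoint, up to a radial weight. Inversion is then achieved by inverting each Abel factor separately via the classical formula
\[
 \int_0^x \frac{g(t)}{(x-t)^{\mu}}\,dt = h(x)
 \quad\Longleftrightarrow\quad
 g(x)=\frac{\sin(\mu\pi)}{\pi}\,\frac{d}{dx}\int_0^x\frac{h(t)}{(x-t)^{1-\mu}}\,dt,
 \qquad 0<\mu<1,
\]
which changes the singular exponent from $\alpha/2$ to $(4-\alpha)/2$ and, on the bounded disk, produces a radial kernel supported on $[\max(r_x,r_y),a]$ rather than on $[\max(r_x,r_y),\infty)$ as in the external representation of Lemma~\ref{lemm:Rrepresentation}. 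This yields $\varphi_n$ as an explicit radial integral of $f_n$ against $s^{3-\alpha}(r_xr_y/s^2)^{\abs{n}}(s^2-r_x^2)^{-(4-\alpha)/2}(s^2-r_y^2)^{-(4-\alpha)/2}$ over $[\max(r_x,r_y),a]$, with accumulated constant $4\sin(\alpha\pi/2)$.

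Finally I would resum the Fourier series. Because the inverted radial kernel carries exactly the factor $(r_xr_y/s^2)^{\abs{n}}$, summing $\sum_{n}(\,\cdot\,)\mathrm{e}^{in(\theta_x-\theta_y)}$ reconstitutes the Poisson kernel $\Op(r_xr_y/s^2,\theta_x-\theta_y)$ through its defining series \eqref{eq:p}, and the mode-wise solutions collapse into the single closed form \eqref{eq:IEsol} with the kernel $R_\D^{-(4-\alpha)}$ exactly as stated. The prefactor and the exponents $(4-\alpha)/2$ are thus bookkept automatically through the two successive Abel inversions.

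The main obstacle is the half-integer Abel order that exceeds one. For $\alpha=3$ the forward operator carries order $\alpha/2=3/2$, so the kernel representation of Lemma~\ref{lemm:Rrepresentation} must be read as a Hadamard finite-part integral and inverted by the regularised (higher-order) version of the Abel formula above; symmetrically, for $\alpha=1$ the inverse kernel has exponent $(4-\alpha)/2=3/2$ and the same finite-part care is needed in \eqref{eq:IEsol}. In either case every interchange of summation, radial integration, differentiation and finite-part operation must be justified rather than performed formally --- in particular one must check that Abel inversion commutes with the finite-part regularisation and that the resummed Fourier series converges in the relevant trace norm. The hypothesis $f\in C^1(\overline{\D}_a)$ is what makes these operations licit and ensures convergence of the radial integrals up to the boundary $r=a$, while correctly tracking the sign $\sin(3\pi/2)=-1$ through the fractional integrations is the delicate point on which the final normalisation depends.
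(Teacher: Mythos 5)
Your proposal is correct and follows essentially the same route as the paper, which does not reprove this result but imports it from Li and Rong \cite{LIR02} and sketches precisely this mechanism: the Poisson-kernel representation of Lemma~\ref{lemm:Rrepresentation} together with the operator $L(\rho)$ yields complete separation of variables into angular modes, reducing \eqref{eq:IE} to iterated Abel integral equations whose successive inversions (with finite-part regularisation for the exponents exceeding one) produce the kernel $R_{\D}^{-(4-\alpha)}$ supported on $[\max(r_x,r_y),a]$. Your bookkeeping of the exponent shift $\alpha/2 \mapsto (4-\alpha)/2$, the prefactor $4\sin(\alpha\pi/2)$, and the sign subtlety at $\alpha=3$ matches the cited source and the consistency checks in Propositions~\ref{prop:Vinvcont} and~\ref{prop:Winvcont}.
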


\begin{rmk}
From \eqref{eq:Rmax} we notice that ${R_{\D}(\Vx, \Vy)}$ is a scaled 
restriction of ${\norm{\Vx - \Vy}{}}$ from $\R^3$ to $\D_a$. Moreover, for 
$a=\infty$, Theorem~\ref{thm:inverse} implies $\frac{1}{R_{\D}(\Vx,\Vy)} 
=\frac{1}{\norm{\Vx-\Vy}{}}$.
\end{rmk}

\begin{lemma}
\label{lem:atanformula}
Let $a>0$ and $\Vx,\Vy\in\D_a$. If $a \geq s\geq \max(r_x,r_y)$,
 we find the following primitive
\begin{align}
\label{eq:prim}
 \int \frac{s^2 \Op\left(\frac{r_xr_y}{s^2}, \theta_x - \theta_y\right)}{(s^2-
 r_x^2\vphantom{r_y^2})^{3/2}(s^2-r_y^2)^{3/2}} ds & = -\frac{1}{2\pi} \left(
 \frac{s}{\norm{\Vx-\Vy}{}^2\sqrt{\smash[b]{s^2-r_x^2}}\sqrt{\smash[b]{s^2-
 r_y^2}}} \right.\nonumber \\ & \left. \qquad \qquad + \frac{\tan^{-1}\left(
 \frac{\sqrt{s^2-r_x^2\vphantom{r_y^2}}\sqrt{s^2-r_y^2}}{s\norm{\Vx-\Vy}{}}
 \right)}{\norm{\Vx-\Vy}{}^3} \right).
\end{align}
\end{lemma}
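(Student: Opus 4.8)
The statement claims an explicit antiderivative, so the natural plan is verification by differentiation: I would differentiate the right-hand side of \eqref{eq:prim} with respect to $s$ and check that it reproduces the integrand. Since this is an indefinite-integral identity, matching derivatives settles the claim up to an immaterial additive constant, and the hypothesis $a \ge s \ge \max(r_x,r_y)$ guarantees $s^2 - r_x^2 \ge 0$ and $s^2 - r_y^2 \ge 0$, so all square roots and the argument of $\tan^{-1}$ are real and the whole expression is smooth in $s$.

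The first preparatory step is to record that, in polar coordinates, $\norm{\Vx-\Vy}{}^2 = r_x^2 + r_y^2 - 2 r_x r_y \cos(\theta_x-\theta_y)$ is independent of $s$, and to rewrite the Poisson kernel from \eqref{eq:p} in a form suited to differentiation. Substituting $\rho = r_x r_y/s^2$ into the closed form of $\Op$ and clearing the factor $s^4$ gives
\[
 \Op\!\left(\tfrac{r_x r_y}{s^2},\theta_x-\theta_y\right) = \frac{1}{2\pi}\,\frac{s^4 - r_x^2 r_y^2}{s^4 + r_x^2 r_y^2 - 2 s^2 r_x r_y\cos(\theta_x-\theta_y)}.
\]
The key algebraic identity I would establish is that the denominator splits additively as
\[
 s^4 + r_x^2 r_y^2 - 2 s^2 r_x r_y\cos(\theta_x-\theta_y) = (s^2-r_x^2)(s^2-r_y^2) + s^2\norm{\Vx-\Vy}{}^2,
\]
which follows by expanding both sides; likewise $s^4 - r_x^2 r_y^2 = s^2\big((s^2-r_x^2)+(s^2-r_y^2)\big) - (s^2-r_x^2)(s^2-r_y^2)$. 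These two identities express the integrand entirely in terms of the abbreviations $A := s^2-r_x^2$, $B := s^2-r_y^2$ and $R := \norm{\Vx-\Vy}{}$.

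Writing the right-hand side of \eqref{eq:prim} as $-\tfrac{1}{2\pi}(T_1+T_2)$ with $T_1 = s\,R^{-2}(AB)^{-1/2}$ and $T_2 = R^{-3}\tan^{-1}\!\big((AB)^{1/2}/(sR)\big)$, I would differentiate each piece using $A' = B' = 2s$ and the fact that $R$ is constant in $s$. A direct computation yields $T_1' = (AB - s^2(A+B))\,R^{-2}(AB)^{-3/2}$ and, after the chain rule on $\tan^{-1}$ and simplifying $1 + (AB)/(s^2R^2)$, $T_2' = (s^2(A+B) - AB)\,R^{-2}(AB)^{-1/2}(s^2R^2+AB)^{-1}$.

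The last step — and the only place where care is needed — is the recombination. Setting $P := s^2(A+B) - AB$, both derivatives carry the factor $\pm P$, and adding them over the common denominator uses the partial-fraction collapse $-\frac{1}{AB} + \frac{1}{s^2R^2 + AB} = \frac{-s^2R^2}{AB(s^2R^2+AB)}$. The factor $R^2$ then cancels, leaving $T_1' + T_2' = -P s^2 (AB)^{-3/2}(s^2R^2+AB)^{-1}$, so that $-\tfrac{1}{2\pi}(T_1'+T_2')$ equals $\frac{1}{2\pi} s^2 P (AB)^{-3/2}(AB + s^2R^2)^{-1}$, which is precisely the integrand once $\Op$ is expressed through the two identities above (note $P = s^4 - r_x^2 r_y^2$). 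The main obstacle is thus purely organizational: keeping the algebra honest through the differentiation of $T_2$ and spotting the cancellation, rather than any genuine analytic difficulty.
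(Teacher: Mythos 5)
Your proposal is correct, and I verified the computations: with $A=s^2-r_x^2$, $B=s^2-r_y^2$, $R=\norm{\Vx-\Vy}{}$ and $P=s^4-r_x^2r_y^2=s^2(A+B)-AB$, one indeed gets $T_1'=-P\,R^{-2}(AB)^{-3/2}$ and $T_2'=P\,R^{-2}(AB)^{-1/2}(s^2R^2+AB)^{-1}$, and the partial-fraction collapse recombines them into $-\frac{1}{2\pi}(T_1'+T_2')=\frac{1}{2\pi}\,s^2P\,(AB)^{-3/2}(AB+s^2R^2)^{-1}$, which matches the integrand once $\Op\left(\frac{r_xr_y}{s^2},\theta_x-\theta_y\right)$ is rewritten via your splitting identities. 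However, your route is genuinely different from the paper's. The paper proves the lemma \emph{constructively}, using Fabrikant's change of variable $\eta:=\sqrt{s^2-r_x^2}\sqrt{s^2-r_y^2}/s$, for which $d\eta/ds=(s^4-r_x^2r_y^2)/(\eta s^3)$; this transforms the left-hand side into $\int \eta^{-2}(R^2+\eta^2)^{-1}\,d\eta$, evaluated by the elementary decomposition $\frac{1}{\eta^2(R^2+\eta^2)}=\frac{1}{R^2}\left(\frac{1}{\eta^2}-\frac{1}{R^2+\eta^2}\right)$, after which back-substitution yields \eqref{eq:prim}. Your two ``key algebraic identities'' are exactly this substitution in disguise: the splitting $s^4+r_x^2r_y^2-2s^2r_xr_y\cos(\theta_x-\theta_y)=AB+s^2R^2$ is equivalent to $R^2+\eta^2=(AB+s^2R^2)/s^2$, and your partial-fraction collapse is the same decomposition read backwards. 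What the paper's approach buys: it \emph{derives} the primitive rather than verifying a given one, and the identical substitution is reused verbatim for Lemma~\ref{lem:atanformula2}, so one computation serves both kernels $K_{\mOW}$ and $K_{\mOV}$. What your approach buys: it needs no inspired substitution (the answer being stated, checking the $s$-derivative suffices) and is mechanically auditable step by step. One minor point of hygiene: the right-hand side of \eqref{eq:prim} is singular at $s=\max(r_x,r_y)$ (where $AB=0$) and undefined for $\Vx=\Vy$, so your smoothness claim should be restricted to the open interval $s>\max(r_x,r_y)$ with $\Vx\neq\Vy$; that is all the lemma requires, since the finite-part evaluation in Proposition~\ref{prop:Vinvcont} discards the lower endpoint anyway.
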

\begin{proof}
This can be shown by direct calculation using the following change of variable 
\cite{FAB91}:
\begin{equation*}\label{eq:cov}
 \eta := \frac{\sqrt{\smash[b]{s^2-r_x^2}}\sqrt{\smash[b]{s^2-r_y^2}}}{s},\qquad 
 \frac{d\eta}{d s} = \frac{s^4 - r_x^2r_y^2}{\eta s^3},
\end{equation*}
which leads to
\begin{align*}
 \int \frac{s^2}{(s^2-r_x^2\vphantom{r_y^2})^{3/2}(s^2-r_y^2)^{3/2}} \frac{1-
 r_x^2r_y^2}{1+r_x^2r_y^2-2r_xr_y\cos(\theta_x-\theta_y)} ds = 
 \int \frac{\eta^{-2}}{\norm{\Vx-\Vy}{}^2 + \eta^2} d\eta,
\end{align*}
where
\begin{equation}
 \int \frac{\eta^{-2}}{\norm{\Vx-\Vy}{}^2 + \eta^2} d\eta = - \frac{1}{\eta 
 \norm{\Vx-\Vy}{}^2} - \frac{\tan^{-1}\left(\frac{\eta}{\norm{\Vx-\Vy}{}}
 \right)}{\norm{\Vx-\Vy}{}^3}.
\end{equation}
By definition of  $\eta$ the result follows.
\end{proof}

Combining the above elements we can prove the next result.
\begin{prop} \label{prop:Vinvcont}
The solution of the  weakly singular integral equation \eqref{eq:VIE} can be 
written as  $\sigma(\Vx) = (\mOW g)(\Vx)$, for all $\Vx \in \D_a$, 
if $g$ is continuously differentiable.
\end{prop}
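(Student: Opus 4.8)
The plan is to identify the weakly singular equation \eqref{eq:VIE} with the special case $\alpha=1$ of Theorem~\ref{thm:inverse} and then to turn the resulting abstract inversion formula into the explicit kernel $K_{\mOW}$ by carrying out the inner radial integration with the primitive furnished by Lemma~\ref{lem:atanformula}. Since $g$ is assumed continuously differentiable, it lies in $C^1(\overline{\D}_a)$ and Theorem~\ref{thm:inverse} applies directly: with $\alpha=1$ (so that $4-\alpha=3$ and $\sin(\alpha\pi/2)=1$) the solution of \eqref{eq:VIE} reads
\begin{equation*}
 \sigma(\Vx) = \frac{1}{\pi}\dashint_{\D_a}\frac{g(\Vy)}{R_\D^3}\,d\D_a(\Vy),
 \qquad
 \frac{1}{R_\D^3} = 4\dashint_{\max(r_x,r_y)}^a \frac{s^2\,\Op\!\left(\tfrac{r_xr_y}{s^2},\theta_x-\theta_y\right)}{(s^2-r_x^2)^{3/2}(s^2-r_y^2)^{3/2}}\,ds.
\end{equation*}
The whole proof then reduces to evaluating this finite-part radial integral in closed form.

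Here is where Lemma~\ref{lem:atanformula} does the heavy lifting: its right-hand side is precisely a primitive of the integrand defining $1/R_\D^3$. I would substitute the two endpoints $s=a$ and $s=\max(r_x,r_y)$ into that primitive. At the upper limit $s=a$ the first summand becomes $a\big(\norm{\Vx-\Vy}{}^2\sqrt{a^2-r_x^2}\sqrt{a^2-r_y^2}\big)^{-1}$, while the argument of the arctangent collapses to $\sqrt{a^2-r_x^2}\sqrt{a^2-r_y^2}/(a\norm{\Vx-\Vy}{})$, which by \eqref{eq:Sfunction} is exactly $S_a(\Vx,\Vy)$. Comparing with \eqref{eq:mWkernel}, the upper-endpoint contribution is therefore $-\tfrac{1}{2\pi}K_{\mOW}(\Vx,\Vy)$.

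The delicate point --- and what I expect to be the main obstacle --- is the lower endpoint $s=\max(r_x,r_y)$, where one of the factors $\sqrt{s^2-r_x^2}$ or $\sqrt{s^2-r_y^2}$ degenerates and the first summand of the primitive diverges like $(s-\max(r_x,r_y))^{-1/2}$. This is exactly the divergence that the Hadamard finite part is designed to discard: I would argue that, in the finite-part sense, the singular boundary term is subtracted off and only the regular remainder survives, and that remainder vanishes because the arctangent term tends to $\tan^{-1}(0)=0$ as its argument goes to zero. Matching the definition of the finite-part integral to the split of the primitive into its divergent and regular parts is the step that needs the most care.

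Collecting the two endpoint contributions gives $1/R_\D^3 = 4\cdot\big(-\tfrac{1}{2\pi}\big)K_{\mOW}(\Vx,\Vy) = -\tfrac{2}{\pi}K_{\mOW}(\Vx,\Vy)$. Inserting this into the inversion formula yields
\begin{equation*}
 \sigma(\Vx) = \frac{1}{\pi}\dashint_{\D_a} g(\Vy)\left(-\frac{2}{\pi}K_{\mOW}(\Vx,\Vy)\right)d\D_a(\Vy) = -\frac{2}{\pi^2}\dashint_{\D_a} g(\Vy)K_{\mOW}(\Vx,\Vy)\,d\D_a(\Vy),
\end{equation*}
which is exactly the definition \eqref{eq:mW} of $(\mOW g)(\Vx)$, establishing the claimed identity $\sigma = \mOW g$.
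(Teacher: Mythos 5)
Your proposal is correct and takes essentially the same route as the paper's own proof: invoke Theorem~\ref{thm:inverse} with $\alpha=1$, evaluate the finite-part radial integral using the primitive from Lemma~\ref{lem:atanformula}, discard the divergent lower-endpoint contribution at $s=\max(r_x,r_y)$ in the finite-part sense, and identify the upper-endpoint value at $s=a$ with the kernel $K_{\mOW}$ from \eqref{eq:mWkernel}, yielding $\tfrac{1}{\pi}R_{\D}^{-3}=-\tfrac{2}{\pi^2}K_{\mOW}$ and hence $\sigma=\mOW g$. Your handling of the lower endpoint (noting that the arctangent term vanishes there since $\tan^{-1}(0)=0$, so only the first summand is discarded) is in fact slightly more explicit than the paper's, which simply drops the entire lower-bound evaluation as infinite.
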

\begin{proof}
Applying Theorem~\ref{thm:inverse}, we get that the solution to \eqref{eq:VIE} 
can be written as \eqref{eq:IEsol}. Moreover, when $a < \infty$, we may use
Lemma~\ref{lem:atanformula} and write
\begin{align*}
 \frac{1}{\pi} \frac{1}{R_{\D}^3(\Vx, \Vy)}  
 &=  {\frac{4}{\pi}} \dashint_{\max(r_x,r_y)}^a \frac{s^2}{(s^2-r_x^2)^{3/2}(s^2
 -r_y^2)^{3/2}} \Op\left(\frac{r_xr_y}{s^2},\theta_x-\theta_y\right) ds\\
 &= -{\frac{2}{\pi^2}} {\footnotesize{fp}}\left(\frac{s}{\norm{\Vx-\Vy}{}^2
 \sqrt{\smash[b]{s^2-r_x^2}}\sqrt{\smash[b]{s^2-r_y^2}}} \right. \\ 
 &\qquad  \qquad \quad \left. + \frac{\tan^{-1}\left(
 \frac{\sqrt{s^2-r_x^2\vphantom{r_y^2}}\sqrt{s^2-r_y^2}}{s\norm{\Vx-\Vy}{}}
 \right)}{\norm{\Vx- \Vy}{}^3}\right) \biggr\vert_{\max(r_x,r_y)}^a,
\end{align*}
where the finite part (\emph{fp}) of the last expression needs to be considered. 
This means that we drop the term corresponding to evaluating our primitive 
\eqref{eq:prim} on the lower bound $\max(r_x,r_y)$, as it becomes infinite.

Hence, we obtain
\begin{align*}
\frac{1}{\pi} \frac{1}{R_{\D}^3(\Vx, \Vy)}  
 &= -\frac{2}{\pi^2} \left(\frac{a}{\norm{\Vx-\Vy}{}^2\sqrt{\smash[b]{a^2-
 r_x^2}}\sqrt{\smash[b]{a^2-r_y^2}}} + \frac{S_a(\Vx,\Vy)}{\norm{\Vx-\Vy}{}^3}
 \right) \\
 &= -\frac{2}{\pi^2} K_{\mOW}(\Vx,\Vy) ,
\end{align*}
with $K_{\mOW}$ from \eqref{eq:mWkernel} as stated.
\end{proof}

\begin{lemma}[Eq.~1.2.14 in \cite{FAB91}]
\label{lem:atanformula2}
Let $a>0$ and $\Vx,\Vy\in\D_a$. If $a \geq s\geq \max(r_x,r_y)$,
 we find the following primitive
\begin{equation}
\begin{split}
\label{eq:prim2}
 \int \frac{1}{(s^2-r_x^2)^{1/2}(s^2-r_y^2)^{1/2}} & \Op\left(\frac{r_xr_y}{s^2},
 \theta_x-\theta_y\right) ds \\&= \frac{1}{2 \pi} \frac{1}{\norm{\Vx-\Vy}{}}
 \tan^{-1} \left(\frac{\sqrt{s^2-r_x^2\vphantom{r_y^2}}\sqrt{s^2-r_y^2}}{s\norm{
 \Vx-\Vy}{}}\right).
\end{split}
\end{equation}
\end{lemma}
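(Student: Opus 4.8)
The plan is to reuse verbatim the substitution already employed in the proof of Lemma~\ref{lem:atanformula}, namely
\begin{equation*}
 \eta := \frac{\sqrt{\smash[b]{s^2-r_x^2}}\sqrt{\smash[b]{s^2-r_y^2}}}{s},
 \qquad \frac{d\eta}{ds} = \frac{s^4 - r_x^2 r_y^2}{\eta s^3}.
\end{equation*}
The hypothesis $a \geq s \geq \max(r_x,r_y)$ guarantees that the two square roots are real and that $\rho = r_x r_y / s^2 < 1$, so that the closed form \eqref{eq:p} of $\Op$ is available. The present integrand differs from the one in Lemma~\ref{lem:atanformula} only by the missing factor $s^2/\bigl((s^2-r_x^2)(s^2-r_y^2)\bigr)$, and I expect the very same change of variables to again collapse the Poisson kernel together with the square-root factors into a single rational expression in $\eta$, this time yielding a simpler $\eta$-integrand.

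First I would write out $\Op$ explicitly via \eqref{eq:p} with $\rho = r_x r_y/s^2$ and $\theta = \theta_x - \theta_y$, clearing the denominator by multiplying through by $s^4$:
\begin{equation*}
 \Op\left(\frac{r_x r_y}{s^2}, \theta_x-\theta_y\right)
 = \frac{1}{2\pi}\,\frac{s^4 - r_x^2 r_y^2}{s^4 + r_x^2 r_y^2
 - 2 r_x r_y s^2 \cos(\theta_x-\theta_y)}.
\end{equation*}
The heart of the argument is then the algebraic identity
\begin{equation*}
 s^2\bigl(\eta^2 + \norm{\Vx-\Vy}{}^2\bigr)
 = s^4 + r_x^2 r_y^2 - 2 r_x r_y s^2 \cos(\theta_x-\theta_y),
\end{equation*}
which follows by inserting $\eta^2 = (s^2-r_x^2)(s^2-r_y^2)/s^2$ together with the polar form $\norm{\Vx-\Vy}{}^2 = r_x^2 + r_y^2 - 2 r_x r_y \cos(\theta_x-\theta_y)$ and cancelling the $(r_x^2+r_y^2)s^2$ contributions. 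This is precisely the cancellation that makes the denominator of $\Op$ disappear under the substitution, and it is the one step that genuinely has to be checked by hand; everything else is bookkeeping.

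With this identity in place, and using $\sqrt{s^2-r_x^2}\sqrt{s^2-r_y^2} = \eta s$ together with $ds = \eta s^3 (s^4 - r_x^2 r_y^2)^{-1}\,d\eta$, the integrand of the left-hand side transforms as
\begin{equation*}
 \frac{\Op\left(\frac{r_x r_y}{s^2}, \theta_x-\theta_y\right)}
 {\sqrt{\smash[b]{s^2-r_x^2}}\sqrt{\smash[b]{s^2-r_y^2}}}\,ds
 = \frac{1}{2\pi}\,\frac{d\eta}{\eta^2 + \norm{\Vx-\Vy}{}^2},
\end{equation*}
so that all $s$-dependence cancels. Integrating with the elementary primitive $\int (\eta^2 + b^2)^{-1}\,d\eta = b^{-1}\tan^{-1}(\eta/b)$ for $b = \norm{\Vx-\Vy}{}$, and finally resubstituting the definition of $\eta$, produces the claimed right-hand side of \eqref{eq:prim2}. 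I anticipate no real obstacle beyond the denominator identity above: the computation is entirely parallel to Lemma~\ref{lem:atanformula}, indeed simpler, because the resulting $\eta$-integrand is $(\eta^2+b^2)^{-1}$ rather than $\eta^{-2}(\eta^2+b^2)^{-1}$.
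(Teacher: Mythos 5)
Your proposal is correct and follows exactly the paper's route: the paper proves Lemma~\ref{lem:atanformula2} by the same change of variables \eqref{eq:cov} used in Lemma~\ref{lem:atanformula} plus ``direct calculation,'' which is precisely what you carry out. Your explicit verification of the denominator identity $s^2\bigl(\eta^2+\norm{\Vx-\Vy}{}^2\bigr)=s^4+r_x^2r_y^2-2r_xr_ys^2\cos(\theta_x-\theta_y)$ and the resulting collapse of the integrand to $\frac{1}{2\pi}(\eta^2+\norm{\Vx-\Vy}{}^2)^{-1}\,d\eta$ correctly fills in the details the paper leaves implicit.
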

\begin{proof}
This can be shown by using the same change of variables in \eqref{eq:cov} and 
direct calculation.
\end{proof}

\begin{prop} \label{prop:Winvcont}
 When $\mu$ is continuosly differentiable, the solution of the 
 hypersingular integral equation \eqref{eq:WIE} can be written as
 $u(\Vx) = (\mOV\mu)(\Vx)$, for all $\Vx \in \D_a$.
\end{prop}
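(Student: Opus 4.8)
The plan is to mirror the proof of Proposition~\ref{prop:Vinvcont} almost verbatim, interchanging the roles of the weakly singular and hypersingular kernels: where that argument used the exponent $\alpha=1$ together with the primitive of Lemma~\ref{lem:atanformula}, I would instead use $\alpha=3$ together with Lemma~\ref{lem:atanformula2}. The statement is exactly the ``dual'' of Proposition~\ref{prop:Vinvcont}, and the machinery of Theorem~\ref{thm:inverse} is set up to cover both cases $\alpha\in\{1,3\}$ simultaneously, so the whole computation should go through with only the bookkeeping of constants and powers changed.

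First I would recast the hypersingular equation \eqref{eq:WIE} into the canonical form \eqref{eq:IE}. On the flat disk, with the normal $\Vn$ pointing in the $x_3$-direction, one has $\frac{\partial^2}{\partial n_x\partial n_y}\frac{1}{\norm{\Vx-\Vy}{}}=\frac{1}{\norm{\Vx-\Vy}{}^3}-\frac{3(x_3-y_3)^2}{\norm{\Vx-\Vy}{}^5}$, and the second term drops out for $\Vx,\Vy\in\D_a$. Interpreted as a Hadamard finite-part integral, this identifies $\OW$ with the operator on the left of \eqref{eq:IE} for $\alpha=3$. Applying Theorem~\ref{thm:inverse} with $f=\mu\in C^1(\overline{\D}_a)$ then yields the solution in the form \eqref{eq:IEsol}, namely $u(\Vx)=\frac{1}{\pi}\dashint_{\D_a}\frac{\mu(\Vy)}{R_\D^{\,4-\alpha}}\,d\D_a(\Vy)$ with $4-\alpha=1$.

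It then remains to evaluate the inner $s$-integral defining $1/R_\D$. Substituting $\alpha=3$ gives $s^{3-\alpha}=1$, exponents $(4-\alpha)/2=1/2$, and prefactor $4\sin(3\pi/2)=-4$, so that $1/R_\D=-4\dashint_{\max(r_x,r_y)}^{a}\frac{\Op(r_xr_y/s^2,\theta_x-\theta_y)}{(s^2-r_x^2)^{1/2}(s^2-r_y^2)^{1/2}}\,ds$, which is precisely the integrand of Lemma~\ref{lem:atanformula2}. Its primitive \eqref{eq:prim2} evaluated between $\max(r_x,r_y)$ and $a$ has a vanishing contribution at the lower limit, because one of the square-root factors in the numerator of the arctangent's argument becomes zero and $\tan^{-1}(0)=0$; at the upper limit $s=a$ the arctangent reproduces exactly $S_a(\Vx,\Vy)$ from \eqref{eq:Sfunction}. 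This leaves $1/R_\D=-\frac{2}{\pi}\,S_a(\Vx,\Vy)/\norm{\Vx-\Vy}{}$, and inserting this into the solution formula yields $u(\Vx)=-\frac{2}{\pi^2}\int_{\D_a}\mu(\Vy)\frac{S_a(\Vx,\Vy)}{\norm{\Vx-\Vy}{}}\,d\D_a(\Vy)=(\mOV\mu)(\Vx)$, matching the definition \eqref{eq:mV}.

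I expect the only genuinely delicate point to be the first step: justifying that $\OW$, originally defined through the finite part of the double normal derivative, really coincides with the $\alpha=3$ finite-part operator of Theorem~\ref{thm:inverse}, i.e.\ that the two notions of regularization agree on the flat disk. One should also check that $1/R_\D$ is merely weakly singular (since $S_a$ is bounded by $\tfrac{\pi}{2}$), so that the outer integral is an honest improper integral rather than a finite-part one, which is what makes the final expression literally equal to $\mOV\mu$ as defined in \eqref{eq:mV} rather than only up to regularization.
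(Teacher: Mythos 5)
Your proposal is correct and follows essentially the same route as the paper: apply Theorem~\ref{thm:inverse} with $\alpha=3$ to get the solution in the form \eqref{eq:IEsol}, then evaluate $1/R_{\D}$ via the primitive of Lemma~\ref{lem:atanformula2}, with the lower limit contributing $\tan^{-1}(0)=0$ and the upper limit producing $S_a(\Vx,\Vy)$, yielding $-\frac{2}{\pi^2}S_a(\Vx,\Vy)/\norm{\Vx-\Vy}{}$ and hence $u=\mOV\mu$. Your two flagged caveats (identifying the finite-part regularization of $\OW$ with the $\alpha=3$ operator of \eqref{eq:IE}, and observing that the resulting kernel is only weakly singular so the outer integral is improper rather than finite-part) are points the paper passes over silently, and including them is a mild improvement in rigor rather than a different approach.
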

\begin{proof}
Using Theorem~\ref{thm:inverse}, we get that the solution to \eqref{eq:WIE} can be 
written as \eqref{eq:IEsol}. Moreover, when $a < \infty$, we may use 
Lemma~\ref{lem:atanformula2}, to write
\begin{align*}
 \frac{1}{\pi} \frac{1}{R_{\D}(\Vx,\Vy)}  
 &=  -{\frac{4}{\pi}} \dashint_{\max(r_x,r_y)}^a \frac{1}{(s^2-r_x^2)^{1/2}(s^2-
 r_y^2)^{1/2}}\Op\left(\frac{r_xr_y}{s^2}, \theta_x-\theta_y\right) ds\nonumber\\
 &= -{\frac{2}{\pi^2}} \frac{1}{\norm{\Vx-\Vy}{}} \tan^{-1}\left(\frac{\sqrt{s^2-
 r_x^2 \vphantom{r_y^2}}\sqrt{s^2-r_y^2}}{s\norm{\Vx-\Vy}{}}\right) 
 \biggr\vert_{\max(r_x,r_y)}^a  \nonumber\\
 &= -{\frac{2}{\pi^2}} \frac{1}{\norm{\Vx-\Vy}{}} \biggr\lbrace \tan^{-1}\left(
 \frac{\sqrt{a^2-r_x^2 \vphantom{r_y^2}}\sqrt{a^2-r_y^2}}{a \norm{\Vx-\Vy}{}}
 \right) -\tan^{-1}(0) \biggr\rbrace  \nonumber\\
 &= -\frac{2}{\pi^2}\frac{1}{\norm{\Vx-\Vy}{}}\tan^{-1}\left(\frac{ 
 \sqrt{a^2-r_x^2 \vphantom{r_y^2}}\sqrt{a^2-r_y^2}}{a \norm{\Vx-\Vy}{}} \right)
 =  -\frac{2}{\pi^2}\frac{S_a(\Vx,\Vy)}{\norm{\Vx-\Vy}{}},
\end{align*}
as stated with $S_a$ from \eqref{eq:Sfunction}.
\end{proof}

With the above elements, we can now prove our Calder\'on-type Identities:

\emph{Proof of Theorem~\ref{thm:caldId}.} Both identities in \eqref{eq:invVIE} follow from Proposition~\ref{prop:Vinvcont} 
 combined with the density of $C^\infty(\overline{\D}_a)$ in $H^{-1/2}(\D_a)$.
 Analogously, the relations in \eqref{eq:invWIE} are consequences of 
 Proposition~\eqref{prop:Winvcont} and the density of $C^\infty(\overline{\D}_a)$ in 
 $H^{1/2}(\D_a)$.
\endproof\\
~\\
\noindent\fbox{\parbox{0.975\textwidth}{%
\begin{prop}
The operators
\begin{align}
\mOV \, : \, H^{-1/2}(\D_a) \rightarrow \wH^{1/2}(\D_a)
\quad\text{and}
\quad
 \mOW \, : \, H^{1/2}(\D_a) \rightarrow \wH^{-1/2}(\D_a)
\end{align}
are continuous.
\end{prop}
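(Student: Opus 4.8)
The plan is to identify the two modified operators with the inverses of the classical operators $\OW$ and $\OV$, whose mapping and coercivity properties have already been recorded, and then to invoke the bounded-inverse provided by the Lax--Milgram theorem. Concretely, I would start from the fact recalled right after \eqref{eq:bilinW} that the bilinear forms \eqref{eq:bilinV} and \eqref{eq:bilinW} are continuous and elliptic on $\wH^{-1/2}(\D_a)$ and $\wH^{1/2}(\D_a)$, respectively (the disk being a particular screen $\Gamma$). Exploiting the duality pairings $H^{1/2}(\D_a)\equiv(\wH^{-1/2}(\D_a))'$ and $H^{-1/2}(\D_a)\equiv(\wH^{1/2}(\D_a))'$ from the Preliminaries, Lax--Milgram then shows that
\[
\OV:\wH^{-1/2}(\D_a)\to H^{1/2}(\D_a)
\qquad\text{and}\qquad
\OW:\wH^{1/2}(\D_a)\to H^{-1/2}(\D_a)
\]
are topological isomorphisms; in particular each admits a \emph{bounded} inverse, whose operator norm is controlled by the reciprocal of the corresponding ellipticity constant.

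Next I would read off from Theorem~\ref{thm:caldId} that these bounded inverses are exactly the modified operators. The pair of identities \eqref{eq:invVIE} asserts $\mOW\OV=\Id_{\wH^{-1/2}(\D_a)}$ and $\OV\mOW=\Id_{H^{1/2}(\D_a)}$, i.e.\ $\mOW=\OV^{-1}$ as a map $H^{1/2}(\D_a)\to\wH^{-1/2}(\D_a)$; likewise \eqref{eq:invWIE} yields $\mOV=\OW^{-1}$ as a map $H^{-1/2}(\D_a)\to\wH^{1/2}(\D_a)$. Since the inverse of a Banach-space isomorphism is automatically continuous, the two continuity claims follow immediately, with the explicit bounds $\norm{\mOW g}{\wH^{-1/2}(\D_a)}\le c_{\mathsf{V}}^{-1}\norm{g}{H^{1/2}(\D_a)}$ and $\norm{\mOV\mu}{\wH^{1/2}(\D_a)}\le c_{\mathsf{W}}^{-1}\norm{\mu}{H^{-1/2}(\D_a)}$, where $c_{\mathsf{V}}$ and $c_{\mathsf{W}}$ denote the respective ellipticity constants.

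The one point that needs care---and which I regard as the genuine content behind the short conclusion---is that $\mOV$ and $\mOW$ are \emph{a priori} defined only through the explicit kernel formulas \eqref{eq:mV} and \eqref{eq:mW}, initially for smooth densities, whereas the abstract inverses act on the full trace spaces. The bridge is supplied by Propositions~\ref{prop:Vinvcont} and~\ref{prop:Winvcont}, which show that on continuously differentiable data the integral operators reproduce precisely the solution of \eqref{eq:VIE}, respectively \eqref{eq:WIE}, so that they coincide with $\OV^{-1}$, respectively $\OW^{-1}$, on a dense subset. Because $C^\infty(\overline{\D}_a)$ is dense in both $H^{1/2}(\D_a)$ and $H^{-1/2}(\D_a)$ and the abstract inverses are already bounded, the latter are the unique continuous extensions of the explicit operators. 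It is this identification, rather than any new estimate, that constitutes the main obstacle; once it is in place, the asserted continuity is nothing more than the bounded-inverse theorem applied to $\OV$ and $\OW$.
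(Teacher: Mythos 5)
Your proposal is correct and follows essentially the same route as the paper: both arguments rest on the isomorphism property of the standard operators $\OV$ and $\OW$ (the paper cites Stephan's Theorem~2.7, which you recover via ellipticity and Lax--Milgram), the inversion identities of Theorem~\ref{thm:caldId} (valid on smooth densities by Propositions~\ref{prop:Vinvcont} and~\ref{prop:Winvcont}), and the density of $C^\infty(\overline{\D}_a)$ in the trace spaces. The only difference is cosmetic: the paper phrases the argument as a proof by contradiction with a sequence $(\mu_n)_n$ of smooth functions, whereas you argue directly via the bounded-inverse theorem, and you make explicit the extension-by-density step that the paper leaves implicit.
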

\mbox{$$}}}
\vspace{0.25cm}
\begin{proof}
 We begin our proof with $\mOV \, : \, H^{-1/2}(\D_a) \rightarrow \wH^{1/2}(
 \D_a)$. Let us assume that $\mOV$ is not a bounded operator. Then, by virtue 
 of density, there exists a sequence $(\mu_n)_n\in C^\infty(\overline{\D}_a)$ such 
 that
 \begin{equation}
  \norm{\mu_n}{H^{-1/2}(\D_a)} = 1, \qquad \norm{\mOV \mu_n}{\wH^{1/2}(\D_a)} 
  \rightarrow \infty, \text{ as } n \rightarrow \infty.
 \end{equation}
Since $\OW \, : \, \wH^{1/2}(\D_a) \rightarrow H^{-1/2}(\D_a)$ is an isomorphism 
(\emph{cf.}~\cite[Thm.~2.7]{STE87}), it holds
\begin{equation}
 \norm{\mOV \mu_n}{\wH^{1/2}(\D_a)} \leq C \norm{\OW \mOV \mu_n}{H^{-1/2}(\D_a)} 
 \underset{\eqref{eq:invWIE}}{=} C \norm{\mu_n}{H^{-1/2}(\D_a)},
\end{equation}
from where we get a contradiction. The proof for $\mOW$ is analogous.
\end{proof}
~\\
\newline
\noindent\fbox{\parbox{0.975\textwidth}{%
\begin{cor} \label{cor:ellcont}
 The bilinear forms 
 \begin{align}
 (\vartheta,\mu) \mapsto \dual{\mOV \vartheta}{\mu}_{\D_a}, \qquad &\vartheta, 
 \mu\in H^{-1/2}(\D_a), \\
 (u,g) \mapsto \dual{\mOW u}{g}_{\D_a}, \qquad &u,g\in H^{1/2}(\D_a),
 \end{align}
are elliptic and continuous in  $H^{-1/2}(\D_a)$ and $H^{1/2}(\D_a)$, 
respectively.
\end{cor}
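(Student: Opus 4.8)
The plan is to obtain both properties purely from structural facts already in hand, without any new integral estimates: the continuity of $\mOV$ and $\mOW$ from the preceding Proposition, the ellipticity together with the isomorphism property of the classical operators $\OV$ and $\OW$ on $\wH^{-1/2}(\D_a)$ and $\wH^{1/2}(\D_a)$ (Stephan, \cite[Thm.~2.7]{STE87} and \cite[Thm.~3.5.9]{SAS10}), and the Calder\'on-type identities of Theorem~\ref{thm:caldId}. I treat the first form $(\vartheta,\mu)\mapsto\dual{\mOV\vartheta}{\mu}_{\D_a}$ in detail; the second is obtained by exchanging the roles of the spaces.

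Continuity is immediate from the boundedness of $\mOV$ and the $L^2$-duality pairing between $\wH^{1/2}(\D_a)$ and its dual $H^{-1/2}(\D_a)$:
\[
 \abs{\dual{\mOV\vartheta}{\mu}_{\D_a}} \leq \norm{\mOV\vartheta}{\wH^{1/2}(\D_a)}\,\norm{\mu}{H^{-1/2}(\D_a)} \leq C\,\norm{\vartheta}{H^{-1/2}(\D_a)}\,\norm{\mu}{H^{-1/2}(\D_a)},
\]
where $C$ is the continuity constant of $\mOV\colon H^{-1/2}(\D_a)\to\wH^{1/2}(\D_a)$. The same chain, now using the boundedness of $\mOW\colon H^{1/2}(\D_a)\to\wH^{-1/2}(\D_a)$ and the pairing between $\wH^{-1/2}(\D_a)$ and $H^{1/2}(\D_a)$, gives continuity of the second form.

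For ellipticity I would introduce the auxiliary function $w:=\mOV\vartheta\in\wH^{1/2}(\D_a)$ for a given $\vartheta\in H^{-1/2}(\D_a)$. The identity $\OW\mOV=\Id_{H^{-1/2}(\D_a)}$ from \eqref{eq:invWIE} yields $\vartheta=\OW w$, so that, using the symmetry of the $L^2$-duality bracket and then the $\wH^{1/2}(\D_a)$-ellipticity of the hypersingular form \eqref{eq:bilinW},
\[
 \dual{\mOV\vartheta}{\vartheta}_{\D_a} = \dual{w}{\OW w}_{\D_a} = \dual{\OW w}{w}_{\D_a} \geq c_{\OW}\,\norm{w}{\wH^{1/2}(\D_a)}^2 .
\]
It then remains to bound $\norm{w}{\wH^{1/2}(\D_a)}$ from below by $\norm{\vartheta}{H^{-1/2}(\D_a)}$, which follows from the continuity of $\OW\colon\wH^{1/2}(\D_a)\to H^{-1/2}(\D_a)$, since $\norm{\vartheta}{H^{-1/2}(\D_a)}=\norm{\OW w}{H^{-1/2}(\D_a)}\leq C_{\OW}\norm{w}{\wH^{1/2}(\D_a)}$. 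Combining the two inequalities produces $\dual{\mOV\vartheta}{\vartheta}_{\D_a}\geq (c_{\OW}/C_{\OW}^2)\,\norm{\vartheta}{H^{-1/2}(\D_a)}^2$, the desired coercivity. The argument for $\dual{\mOW u}{u}_{\D_a}$ is verbatim the same after setting $w:=\mOW u\in\wH^{-1/2}(\D_a)$, invoking $\OV\mOW=\Id_{H^{1/2}(\D_a)}$ from \eqref{eq:invVIE} to get $u=\OV w$, and using the symmetry and $\wH^{-1/2}(\D_a)$-ellipticity of \eqref{eq:bilinV} together with the boundedness of $\OV\colon\wH^{-1/2}(\D_a)\to H^{1/2}(\D_a)$.

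The only genuinely delicate point—and the place where the earlier machinery is actually consumed—is the last step of the ellipticity argument: one must upgrade the coercivity of the \emph{classical} operators on the small spaces $\wH^{\pm1/2}(\D_a)$ to coercivity of the \emph{new} forms on the larger spaces $H^{\mp1/2}(\D_a)$. This is exactly where it matters that Stephan's result provides an \emph{isomorphism}, i.e.\ a two-sided norm estimate, and not merely ellipticity; the lower bound $\norm{w}{\wH^{\pm1/2}(\D_a)}\gtrsim\norm{\vartheta}{H^{\mp1/2}(\D_a)}$ would fail without it. Everything else is a routine application of the Cauchy--Schwarz-type duality bound and the Calder\'on identities.
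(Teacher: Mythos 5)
Your proof is correct and follows essentially the same route the paper compresses into one line---continuity and ellipticity of the standard BIOs combined with Theorem~\ref{thm:caldId}---with the continuity half resting, as in the paper, on the mapping properties of $\mOV$ and $\mOW$ established in the preceding proposition, and the coercivity half on the substitution $w:=\mOV\vartheta$, $\vartheta=\OW w$. One cosmetic quibble: in the final coercivity step the bound $\norm{w}{\wH^{1/2}(\D_a)}\geq C_{\OW}^{-1}\norm{\vartheta}{H^{-1/2}(\D_a)}$ uses only the \emph{boundedness} of $\OW$ (the surjectivity needed to write $\vartheta=\OW w$ is already supplied by $\OW\mOV=\Id$ from Theorem~\ref{thm:caldId}), so your closing remark that the full two-sided isomorphism estimate of \cite[Thm.~2.7]{STE87} is essential there overstates the dependence.
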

\mbox{$$}}}
\vspace{0.25cm}
\begin{proof}
 Follows from continuity and ellipticity of the standard BIOs $\OW$ and $\OV$ 
 combined with Theorem~\ref{thm:caldId}.
\end{proof}

\section{Bilinear Form for the Modified Hypersingular Integral Operator over $\D_a$}
\label{sec:mOWibp}

We note that formula \eqref{eq:mWkernel} is not practical when implementing a 
Galerkin BEM discretization. On $\S$ and, more generally, on every closed surface 
$\partial\Omega$, we have
\begin{equation}
\dual{\OW u}{v}_{\partial\Omega} = \dual{\OV \curl_{\partial\Omega}u}{\curl_{
		\partial\Omega}v}_{\partial\Omega},
\end{equation}
where, abusing notations, we wrote $\OV$ and $\OW$ for the associated weakly 
singular and hypersingular BIOs (\emph{cf.}~\cite{STE08,SAS10}).

In this section we establish an analogous relation between $\mOW$ and $\mOV$. 
Let us begin by considering the modified weakly singular operator $\mOV$ over 
the unitary disk given by \eqref{eq:mV} and denote $ \omega(\Vx):=\sqrt{1-r_x^2},
 \:\Vx \in \D_1$.

\begin{prop}
 \label{prop:Vinvvar}
The bilinear form associated to the modified hypersingular operator $\mOW$ 
over $\D_1$ satisfies
\begin{equation}
 \label{eq:Vinvvar}
 \dual{\mOW u}{v}_{\D_1} = \frac{2}{\pi^2} \int_{\D_1} \int_{\D_1} \frac{
 S_1(\Vx, \Vy)}{\norm{\Vx-\Vy}{}} \curl_{\D_1,\Vx} u(\Vx)\cdot\curl_{\D_1,\Vy} 
 v(\Vy) d\D_1(\Vx)d\D_1(\Vy),
\end{equation}
for all {\bf $u,v\in H_*^{1/2}(\D_1) := \lbrace v \in H^{1/2}(\D_1) \,:\, \dual{
v}{\omega^{-1}}_{\D_1}=0 \rbrace$}.
\end{prop}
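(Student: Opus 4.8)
The plan is to transport the classical argument behind the closed-surface relation $\dual{\OW u}{v}=\dual{\OV\,\curl u}{\curl v}$ from \cite[Thm.~6.17]{STE08} to the modified operators. Concretely, I would start from the finite-part representation \eqref{eq:mW}--\eqref{eq:mWkernel}, write
\[
 \dual{\mOW u}{v}_{\D_1} = -\frac{2}{\pi^2}\int_{\D_1}\int_{\D_1}u(\Vy)\,K_{\mOW}(\Vx,\Vy)\,v(\Vx)\,d\D_1(\Vx)\,d\D_1(\Vy),
\]
and peel the two singular derivatives encoded in $K_{\mOW}$ off the kernel and onto the surface curls of $u$ and $v$ by integrating by parts twice on the flat disk. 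The geometric fact I would lean on throughout is that on $\D_1$ the vectorial surface curl is just the planar gradient rotated by a right angle, so that $\curl_{\D_1,\Vx}\cdot\curl_{\D_1,\Vy}=\nabla_\Vx\cdot\nabla_\Vy$ when acting on a scalar kernel; this reduces the whole manipulation to plane Euclidean calculus.

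The first ingredient is the kernel identity
\begin{equation*}
 \nabla_\Vx\cdot\nabla_\Vy\frac{S_1(\Vx,\Vy)}{\norm{\Vx-\Vy}{}} = -K_{\mOW}(\Vx,\Vy), \qquad \Vx\neq\Vy,
\end{equation*}
read in the finite-part sense, the precise planar analogue of the elementary three-dimensional relation $\nabla_\Vx\cdot\nabla_\Vy\norm{\Vx-\Vy}{}^{-1}=-\norm{\Vx-\Vy}{}^{-3}$. Using the representations $\frac{S_1}{\norm{\cdot}{}}=-\frac{\pi}{2R_\D}$ and $K_{\mOW}=-\frac{\pi}{2R_\D^3}$ that can be read off from Propositions~\ref{prop:Vinvcont} and~\ref{prop:Winvcont}, this is equivalent to $\nabla_\Vx\cdot\nabla_\Vy R_\D^{-1}=-R_\D^{-3}$, reflecting the fact that $R_\D$ behaves as an intrinsic distance on the disk. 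I would confirm it by direct differentiation of the closed form \eqref{eq:Sfunction}, which is laborious but mechanical.

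With the kernel identity in hand, substituting it and integrating by parts once in $\Vx$ and once in $\Vy$ (via the surface Green formula for the rotated gradient) produces the sought volume term
\[
 \frac{2}{\pi^2}\int_{\D_1}\int_{\D_1}\frac{S_1(\Vx,\Vy)}{\norm{\Vx-\Vy}{}}\,\curl_{\D_1,\Vx}u(\Vx)\cdot\curl_{\D_1,\Vy}v(\Vy)\,d\D_1(\Vx)\,d\D_1(\Vy)
\]
together with two line integrals over $\partial\D_1$. The boundary term generated by the first integration by parts carries the factor $S_1$ itself, evaluated with one argument on $\partial\D_1$; since $S_1(\Vx,\Vy)$ vanishes whenever $\abs{\Vx}=1$ or $\abs{\Vy}=1$, as noted after \eqref{eq:Sfunction}, this contribution disappears outright. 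What survives is a single line integral whose density involves the first derivatives of $S_1$ on $\partial\D_1$.

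This residual line integral is where the hypothesis $u,v\in H^{1/2}_*(\D_1)$ enters and is the genuine difficulty of the proof. Although $S_1$ vanishes on $\partial\D_1$, its derivatives blow up like $\omega^{-1}=(1-r^2)^{-1/2}$ as the edge is approached --- exactly the regime in which $S_1$ is discontinuous on $\partial\D_1\times\partial\D_1$ --- so the boundary contribution cannot be discarded pointwise. The hard part will be a careful local analysis of this edge-singular kernel in the corner where $\omega(\Vx)$, $\omega(\Vy)$ and $\norm{\Vx-\Vy}{}$ tend to zero simultaneously; I expect it to show that the surviving term factors through the weighted means $\dual{v}{\omega^{-1}}_{\D_1}$ and $\dual{u}{\omega^{-1}}_{\D_1}$, which vanish by the very definition of $H^{1/2}_*(\D_1)$. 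This both vindicates the constraint space and closes the argument; I would finish by extending the identity from smooth representatives to all of $H^{1/2}_*(\D_1)$ by density and the continuity of the two bilinear forms.
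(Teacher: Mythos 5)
You have proposed precisely the route that the paper explicitly flags as not working: immediately after stating Proposition~\ref{prop:Vinvvar}, the authors remark that a proof by means of formal integration by parts ``remains elusive, as it encounters difficulties due to the finite part integrals involved in the definition of $\mOW$''. Both of your load-bearing steps are conjectured rather than proven. First, the kernel identity $\nabla_{\Vx}\cdot\nabla_{\Vy}\bigl(S_1(\Vx,\Vy)/\norm{\Vx-\Vy}{}\bigr)=-K_{\mOW}(\Vx,\Vy)$ is dismissed as ``laborious but mechanical'', but the paper's own differential computation points the other way: it records that
\begin{equation*}
\Delta K_{\mOV}(\Vx,\Vy) \;=\; K_{\mOW}(\Vx,\Vy)\;-\;\frac{(2\pi a)\,\Op\left(\frac{r_xr_y}{a},\theta_x-\theta_y\right)}{(a^2-r_y^2)^{3/2}\sqrt{a^2-r_x^2}}\;\neq\; K_{\mOW}(\Vx,\Vy),
\end{equation*}
i.e.\ the naive differential relation between the modified kernels fails by a non-symmetric Poisson-kernel term that is singular at $\partial\D_a$. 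Because $S_a$ depends on $r_x$ and $r_y$ separately, the mixed derivative $\nabla_{\Vx}\cdot\nabla_{\Vy}$ does not reduce to a Laplacian of the kernel, so your identity is not refuted outright; but nothing suggests it comes out clean when the Laplacian version does not, and correction terms of exactly this Poisson type should be expected. (Incidentally, your ``elementary three-dimensional relation'' $\nabla_{\Vx}\cdot\nabla_{\Vy}\norm{\Vx-\Vy}{}^{-1}=-\norm{\Vx-\Vy}{}^{-3}$ is false in $\R^3$, where the left-hand side vanishes by harmonicity; only the planar restriction satisfies it.) Second, and more seriously, the entire content of the proposition --- why the identity holds exactly on $H^{1/2}_*(\D_1)$ --- sits in your residual edge integral, and there you only write that you \emph{expect} it to factor through the weighted means $\dual{u}{\omega^{-1}}_{\D_1}$, $\dual{v}{\omega^{-1}}_{\D_1}$. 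That expectation is the theorem, not a step toward it, and the regime is delicate: $\nabla S_1$ blows up like $\omega^{-1}$ while $\norm{\Vx-\Vy}{}\to0$ simultaneously. On top of this, the finite part in \eqref{eq:mW} is defined (see the proof of Proposition~\ref{prop:Vinvcont}) by discarding a divergent evaluation of a primitive in an auxiliary one-dimensional $s$-integral; you give no argument that this regularization is compatible with the surface Green formula, and finite-part integrals generically produce compensating terms beyond the classical boundary integrals when integrated by parts --- this is exactly the difficulty the authors cite.

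For contrast, the paper's actual proof avoids all of this by a spectral argument with N\'ed\'elec's projected spherical harmonics: the Gamma-function recursion \eqref{eq:recgamma} turns the eigenrelation $(\mOW y_l^m)=\frac{4}{\lambda_l^m}\,y_l^m/\omega$ into $(\mOW y_l^m)=\frac12\bigl(\oL_+\mOV\oL_- + \oL_-\mOV\oL_+\bigr)y_l^m$ for $(l,m)\neq(0,0)$, using the ladder identities \eqref{eq:Lpy}--\eqref{eq:Lmy} and the uniqueness of $\OW^{-1}=\mOV$; adjointness $\oL_\pm^*=\oL_\mp$ together with \eqref{eq:curldotcurl} then yields \eqref{eq:Vinvvar} on the span of the PSHs, and orthogonality plus density do the rest. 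Note in particular that the restriction to $H^{1/2}_*(\D_1)$ arises there for an algebraic reason --- the recursion degenerates at $(l,m)=(0,0)$ through a $\mathsf{\Gamma}(0)\cdot 0$ limit, excluding exactly the constants $\spn\{y_0^0\}$ --- not through a cancellation of boundary terms. To make your route work you would have to establish both the finite-part kernel identity (with whatever correction terms actually appear) and the edge-limit factorization; as written, the proposal is a plausible plan whose two genuinely hard steps are missing.
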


This result was first reported by Ned\'el\'ec and Ramaciotti in their spectral 
study of the BIOs over $\D_1$ and their variational inverses \cite{RAM16}. For 
the sake of completeness, we introduce the key tools they derived and provide 
an alternative simpler proof of this proposition in Section~\ref{sec:series}. 
A proof by means of formal integration by parts remains elusive, as it encounters 
difficulties due to the finite part integrals involved in the definition of 
$\mOW$ and its kernel introduced in \eqref{eq:IEsol}.

We also emphasize that the space $H_*^{1/2}(\D_1)$ corresponds to $H^{1/2}(\D_1
)/\R$ (see end of Section~\ref{sec:series} for further details). It is important to observe that the right-hand side of \eqref{eq:Vinvvar} maps 
constants to zero and thus has a non-trivial kernel if considered in the 
whole $H^{1/2}(\D_1)$ space. In other words, the bilinear form in 
\eqref{eq:Vinvvar} is $H^{1/2}_*(\D_1)$-elliptic but does not have this property 
on $H^{1/2}(\D_1)$. For this reason, its extension to $H^{1/2}(\D_1)$ 
does not actually match the bilinear form of $\mOW$ there, which is $H^{1/2}(\D_1)$-elliptic. In order to remedy this situation, we add an appropriate 
regularizing term coming from the definition of $H^{1/2}_*(\D_1)$ and the 
following result.

\begin{prop}
\label{prop:mOW1}
The following identity holds:
 \begin{equation}
 \label{eq:mOW1}
(\mOW 1)(\Vy)= \frac{4}{\pi}\omega^{-1}(\Vy), \qquad \Vy\in\D_1.
\end{equation}
\end{prop}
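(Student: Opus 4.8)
The plan is to exploit that $\mOW$ is the exact inverse of $\OV$, established in Theorem~\ref{thm:caldId}, rather than to attack the finite-part integral \eqref{eq:mW} head-on. Since the constant function $1$ is smooth, it lies in $H^{1/2}(\D_1)$, and the identity $\OV\mOW=\Id_{H^{1/2}(\D_1)}$ gives $\OV(\mOW 1)=1$, while the continuity result stated above guarantees $\mOW 1\in\wH^{-1/2}(\D_1)$. Because $\OV:\wH^{-1/2}(\D_1)\to H^{1/2}(\D_1)$ is an isomorphism (\emph{cf.}~\cite{STE87}), $\mOW 1$ is the \emph{unique} density in $\wH^{-1/2}(\D_1)$ whose single-layer potential is identically $1$ on the disk, i.e.\ the (scaled) capacitary density. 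Thus it suffices to verify that the candidate $\tfrac{4}{\pi}\omega^{-1}$ belongs to $\wH^{-1/2}(\D_1)$ and satisfies $\OV\bigl(\tfrac4\pi\omega^{-1}\bigr)=1$; injectivity of $\OV$ then forces $\mOW 1=\tfrac4\pi\omega^{-1}$.

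The heart of the matter is to show $\OV(\omega^{-1})\equiv\tfrac\pi4$ on $\D_1$. I would compute this using Lemma~\ref{lemm:Rrepresentation} with $\alpha=1$ (so $\sin\tfrac{\alpha\pi}{2}=1$ and $s^{\alpha-1}=1$), which expresses $\tfrac1{4\pi}\norm{\Vx-\Vy}{}^{-1}$ through the Poisson kernel $\Op$. Inserting this into \eqref{eq:VIE} and integrating first in the angle $\theta_x$ collapses the Poisson kernel, since $\int_0^{2\pi}\Op(\rho,\theta_x-\theta_y)\,d\theta_x=1$; what remains is the purely radial expression
\[
 \OV(\omega^{-1})(\Vy)=\frac1\pi\int_0^1\frac{r_x}{\sqrt{1-r_x^2}}\int_{\max(r_x,r_y)}^{\infty}\frac{ds}{\sqrt{s^2-r_x^2}\sqrt{s^2-r_y^2}}\,dr_x .
\]
Interchanging the order of integration and carrying out the inner $r_x$-integral (a substitution $u=r_x^2$ reduces it to an $\operatorname{arcsinh}$) yields $\int_0^{\min(1,s)}\tfrac{r_x\,dr_x}{\sqrt{1-r_x^2}\sqrt{s^2-r_x^2}}=\tfrac12\ln\bigl|\tfrac{s+1}{s-1}\bigr|$, so that
\[
 \OV(\omega^{-1})(\Vy)=\frac1{2\pi}\int_{r_y}^{\infty}\frac{1}{\sqrt{s^2-r_y^2}}\,\ln\Bigl|\frac{s+1}{s-1}\Bigr|\,ds .
\]
One then checks that this integral is independent of $r_y\in[0,1)$ --- most transparently by differentiating under the integral sign in $r_y$ and seeing the derivative vanish --- and evaluates it at $r_y=0$, where breaking the logarithm into an $\operatorname{arctanh}$-series and using $\sum_{k\ge0}(2k+1)^{-2}=\tfrac{\pi^2}{8}$ gives $\int_0^{\infty}s^{-1}\ln\bigl|\tfrac{s+1}{s-1}\bigr|\,ds=\tfrac{\pi^2}{2}$, hence the value $\tfrac1{2\pi}\cdot\tfrac{\pi^2}{2}=\tfrac\pi4$.

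It remains to record that $\omega^{-1}\in\wH^{-1/2}(\D_1)$. This is standard: the inverse-square-root blow-up at $\partial\D_1$ is exactly the admissible singularity for $\wH^{-1/2}$, and concretely the single-layer potential of $\omega^{-1}$ in $\R^3$ has finite Dirichlet energy (the disk has finite capacity), which is equivalent to membership in the energy space $\wH^{-1/2}(\D_1)$. With $\tfrac4\pi\omega^{-1}\in\wH^{-1/2}(\D_1)$ and $\OV(\tfrac4\pi\omega^{-1})=1=\OV(\mOW 1)$, injectivity of $\OV$ yields $(\mOW 1)(\Vy)=\tfrac4\pi\omega^{-1}(\Vy)$, as claimed.

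The main obstacle I anticipate sits entirely in the computation of $\OV(\omega^{-1})$, and is twofold: first, justifying the interchange of the $s$- and $r_x$-integrations, since for $\alpha=1$ the representation in Lemma~\ref{lemm:Rrepresentation} is an improper integral and Fubini must be applied with care near $s=r_x,r_y$ and as $s\to\infty$; second, proving the $r_y$-independence of the final radial integral cleanly. If the direct constancy check proves awkward, I would instead invoke the classical potential-theoretic fact that the constant-potential charge density on a disk of radius $a$ is proportional to $(a^2-r^2)^{-1/2}$, which supplies both the constancy and, after the center computation, the constant $\tfrac\pi4$.
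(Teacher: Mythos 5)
Your proposal is correct, but it proves the identity by a genuinely different route than the paper. The paper attacks the finite-part integral \eqref{eq:mW} head-on: it invokes the separated radial form of $K_{\mOW}$ from \cite[Eq.~(39)]{LIR02}, converts the two strongly singular radial integrals into derivatives of weakly singular ones via Hadamard finite-part differentiation formulas (\cite[Eq.~(34)]{LIR02}, \cite[Eq.~(50)]{FAB97}), and finishes with two elementary integrations. You instead reduce the claim, via the already-established identity $\OV\mOW = \Id_{H^{1/2}(\D_1)}$ and the isomorphism $\OV:\wH^{-1/2}(\D_1)\to H^{1/2}(\D_1)$ from \cite{STE87}, to the classical fact $\OV(\omega^{-1})\equiv\frac{\pi}{4}$, which you then verify through the $\alpha=1$ case of Lemma~\ref{lemm:Rrepresentation}. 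There is no circularity: Theorem~\ref{thm:caldId} and the continuity of $\mOW$ precede Proposition~\ref{prop:mOW1} in the paper, and the paper itself notes after its proof that the result is consistent with the known solution of \eqref{eq:VIE} for $g=1$ \cite{MAR06} --- precisely the fact your argument runs through. What your route buys is complete avoidance of finite-part calculus: for $\alpha=1$ every integrand in your computation is nonnegative (the Poisson kernel $\Op$ is positive), so the interchange of the $s$- and $r_x$-integrations that worried you is simply Tonelli, no delicacy required. What the paper's route buys is self-containedness at the level of the definition of $\mOW$, so that its computation doubles as an independent consistency check of the inverse relations. Two refinements to your execution: first, the membership $\omega^{-1}\in\wH^{-1/2}(\D_1)$ needs no capacity argument, since $\omega^{-1}=\sqrt{4\pi}\,y_0^0\,\omega^{-1}$ is, up to scaling, a basis element of $\wH^{-1/2}(\D_1)$ by Proposition~\ref{prop:bases}(iv); second, the constancy in $r_y$ of your radial integral is obtained more safely not by differentiating under the integral sign (delicate at the endpoint singularity $s=r_y$ and the logarithmic one at $s=1$) but by writing $\ln\abs{\frac{s+1}{s-1}}=\int_0^1\frac{2s}{s^2-t^2}\,dt$ and swapping the order: since $\mathrm{p.v.}\int_0^\infty\frac{dw}{w^2-a^2}=0$ while $\int_0^\infty\frac{dw}{w^2+a^2}=\frac{\pi}{2a}$, one gets $\int_{r_y}^\infty\frac{1}{\sqrt{s^2-r_y^2}}\ln\abs{\frac{s+1}{s-1}}\,ds=\pi\int_0^{r_y}\frac{dt}{\sqrt{r_y^2-t^2}}=\frac{\pi^2}{2}$ for every $r_y\in[0,1)$, which confirms both the constancy and the value $\OV(\omega^{-1})=\frac{1}{2\pi}\cdot\frac{\pi^2}{2}=\frac{\pi}{4}$, hence $(\mOW 1)(\Vy)=\frac{4}{\pi}\omega^{-1}(\Vy)$.
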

\begin{proof}
Let us rewrite the modified hypersingular BIO acting on the constant 
function equal to one as
\begin{align}
(\mOW 1)(\Vy) &= \frac{2}{\pi^2}\int_{\D_1} k_{\mOW}(\Vx,\Vy) d\D_1(\Vx) \nonumber 
\\&= \frac{4}{\pi} \int_{r_y}^1 \frac{s^2}{(s^2-r_y^2)^{3/2}} 
\int_0^s \frac{r_x}{(s^2-r_x^2)^{3/2}} dr_x ds, \label{eq:expW1}
\end{align}
where in the second line \cite[Eq.~(39)]{LIR02} was applied, with an appropriate scaling 
for \eqref{eq:VIE}. Based on Hadamard's finite part integration, we can derive 
the following two formulas \cite[Eq.~(34)]{LIR02}\cite[Eq.~(50)]{FAB97}:
\begin{align}
\frac{d}{d s}\int_0^s \frac{r_x}{\sqrt{s^2-r_x^2}} dr_x = 
-s \int_0^s \frac{r_x}{(s^2-r_x^2)^{3/2}} dr_x,
\end{align}
and
\begin{align}
\frac{d}{d r_y}\int_{r_y}^1 \frac{s f(s)}{\sqrt{\smash[b]{s^2-r_y^2}\vphantom{
			r_x}}} ds &= 
r_y \int_{r_y}^1 \frac{s f(s)}{(s^2-r_y^2)^{3/2}} ds \nonumber\\
&= -\frac{r_yf(1)}{\sqrt{\smash[b]{1-r_y^2}\vphantom{r_x}}} + r_y\int_{r_y}^1 
\frac{1}{\sqrt{\smash[b]{s^2-r_y^2}\vphantom{r_x}}} \frac{d}{ds} f(s) ds.
\end{align}
Using these derivatives in \eqref{eq:expW1}, we obtain
\begin{align*}
(\mOW 1)(\Vy) &= -\frac{4}{\pi} \frac{1}{r_y} \frac{d}{d r_y}\int_{r_y}^1 
\frac{s}{\sqrt{\smash[b]{s^2-r_y^2}\vphantom{r_x}}}\frac{d}{d s}\int_0^s 
\frac{r_x}{\sqrt{s^2-r_x^2}} dr_x ds.
\end{align*}
We integrate the inner integral and get
\begin{align*}
\int_0^s \frac{r_x}{\sqrt{s^2-r_x^2}}dr_x = (-\sqrt{s^2-r_x^2})\big\vert_{0}^s=s.
\end{align*}
Our expression then becomes
\begin{align*}
(\mOW 1)(\Vy) &= -\frac{4}{\pi} \frac{1}{r_y} \frac{d}{d r_y}\int_{r_y}^1 
\frac{s}{\sqrt{\smash[b]{s^2-r_y^2}\vphantom{r_x}}}ds.
\end{align*}
Since
\begin{align*}
\int_{r_y}^1 \frac{s}{\sqrt{\smash[b]{s^2-r_y^2}\vphantom{r_x}}}ds = \sqrt{1-
	r_y^2},
\end{align*}
it holds,
\begin{align*}
(\mOW 1)(\Vy) &= \frac{4}{\pi} \frac{1}{\sqrt{\smash[b]{1-r_y^2}\vphantom{r_x
}}},
\end{align*}
as stated.
\end{proof}

As expected, this result is consistent with known solutions of \eqref{eq:VIE} 
when the right-hand side is $g=1$ \cite{MAR06}. From this, we see that for $u_c$ constant, $(\mOW u_c)(\Vy)$ is equivalent to
 \begin{equation}
(\mOW u_c)(\Vy)= \frac{2}{\pi^2} \int_{\D_1} u_c\omega^{-1}(\Vx)\omega^{-1}
(\Vy) d\D_1(\Vx), \qquad \Vy\in\D_1,
\end{equation}
since $\dual{1}{\omega^{-1}}_{\D_1}=2\pi$. Therefore, the following result holds: \\
~\\
\noindent\fbox{\parbox{0.975\textwidth}{%
\begin{prop}
\label{prop:Vinvvarfull}
 The symmetric bilinear form associated to the modified hypersingular operator
 $\mOW: H^{1/2}(\D_1) \rightarrow \wH^{-1/2}(\D_1)$ can be written as
 \begin{align}
\label{eq:bbmOW}
 \dual{\mOW u}{v}_{\D_1} =& \frac{2}{\pi^2} \int_{\D_1} \int_{\D_1} \frac{
 S_1(\Vx, \Vy)}{\norm{\Vx-\Vy}{}} \curl_{\D_1,\Vx} u(\Vx)\cdot\curl_{\D_1,\Vy} 
 v(\Vy) d\D_1(\Vx)d\D_1(\Vy) \nonumber \\ &+ 
 \frac{2}{\pi^2} \int_{\D_1}\int_{\D_1} \frac{u(\Vx)v(\Vy)}{
 \omega(\Vx)\omega(\Vy)} d\D_1(\Vx)d\D_1(\Vy), \quad \forall u,v\in H^{1/2}(
 \D_1).
\end{align}
\end{prop}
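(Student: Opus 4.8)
The plan is to reduce the claim to the two identities already at hand --- Proposition~\ref{prop:Vinvvar} on the subspace $H_*^{1/2}(\D_1)$ and the explicit evaluation $(\mOW 1)(\Vy)=\tfrac{4}{\pi}\omega^{-1}(\Vy)$ of Proposition~\ref{prop:mOW1} --- by splitting $H^{1/2}(\D_1)$ into the hyperplane $H_*^{1/2}(\D_1)$ and the line $\R$ of constants, and then exploiting bilinearity and symmetry of $\dual{\mOW\cdot}{\cdot}_{\D_1}$. Concretely, for $u\in H^{1/2}(\D_1)$ I would set $c_u:=\tfrac{1}{2\pi}\dual{u}{\omega^{-1}}_{\D_1}$ and $u_*:=u-c_u$; since $\dual{1}{\omega^{-1}}_{\D_1}=2\pi$, the remainder satisfies $\dual{u_*}{\omega^{-1}}_{\D_1}=0$, i.e.\ $u_*\in H_*^{1/2}(\D_1)$. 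This yields the affine decomposition $u=u_*+c_u$, and analogously $v=v_*+c_v$, which realises $H_*^{1/2}(\D_1)$ as $H^{1/2}(\D_1)/\R$ as anticipated in the text.

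Next I would expand $\dual{\mOW u}{v}_{\D_1}$ bilinearly into four contributions and dispose of them one at a time. For the diagonal term $\dual{\mOW u_*}{v_*}_{\D_1}$, both arguments lie in $H_*^{1/2}(\D_1)$, so Proposition~\ref{prop:Vinvvar} applies; I would then observe that the surfacic curl annihilates constants, so that $\curl_{\D_1}u_*=\curl_{\D_1}u$ and $\curl_{\D_1}v_*=\curl_{\D_1}v$. Hence this term already equals the first (curl-curl) double integral in \eqref{eq:bbmOW}, now written with $u,v$ themselves. For the two mixed terms, the symmetry of the form --- inherited from $\mOW=\OV^{-1}$ (Theorem~\ref{thm:caldId}) and the symmetry of $\OV$ --- together with Proposition~\ref{prop:mOW1} gives $\dual{\mOW u_*}{1}_{\D_1}=\dual{\mOW 1}{u_*}_{\D_1}=\tfrac{4}{\pi}\dual{\omega^{-1}}{u_*}_{\D_1}=0$, and likewise with $v_*$, so both cross contributions vanish.

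It remains to evaluate the constant-constant term $c_uc_v\,\dual{\mOW 1}{1}_{\D_1}$. Using Proposition~\ref{prop:mOW1} once more, $\dual{\mOW 1}{1}_{\D_1}=\tfrac{4}{\pi}\dual{\omega^{-1}}{1}_{\D_1}=8$, so this term equals $8\,c_uc_v$. Finally I would recognise that $\dual{u}{\omega^{-1}}_{\D_1}=2\pi c_u$ and $\dual{v}{\omega^{-1}}_{\D_1}=2\pi c_v$, whence
\begin{equation*}
 8\,c_uc_v=\frac{2}{\pi^2}\dual{u}{\omega^{-1}}_{\D_1}\dual{v}{\omega^{-1}}_{\D_1}
 =\frac{2}{\pi^2}\int_{\D_1}\int_{\D_1}\frac{u(\Vx)v(\Vy)}{\omega(\Vx)\omega(\Vy)}\,d\D_1(\Vx)\,d\D_1(\Vy),
\end{equation*}
which is exactly the second (regularising) term in \eqref{eq:bbmOW}. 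Summing the four contributions yields the asserted formula; moreover, since both double integrals on the right-hand side are manifestly invariant under exchanging $(u,\Vx)\leftrightarrow(v,\Vy)$ (using $S_1(\Vx,\Vy)=S_1(\Vy,\Vx)$), the symmetry of the bilinear form comes for free.

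I expect the genuinely delicate points to be bookkeeping rather than analysis. The one substantive ingredient is the symmetry of $\dual{\mOW\cdot}{\cdot}_{\D_1}$, needed to handle the asymmetric cross term $\dual{\mOW u_*}{c_v}_{\D_1}$ (the companion term $\dual{\mOW c_u}{v_*}_{\D_1}$ can be treated directly via Proposition~\ref{prop:mOW1}); this I would justify by appeal to $\mOW=\OV^{-1}$. One must also confirm that the pairing $\dual{\cdot}{\omega^{-1}}_{\D_1}$ is legitimate, which it is because $\omega^{-1}=\tfrac{\pi}{4}\,\mOW 1\in\wH^{-1/2}(\D_1)$ by Proposition~\ref{prop:mOW1}, and that Proposition~\ref{prop:Vinvvar} is invoked only on $H_*^{1/2}(\D_1)$, where it is stated. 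The remainder is the linear-algebraic splitting above, so I do not anticipate any serious analytical obstacle.
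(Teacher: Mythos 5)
Your proof is correct and takes essentially the same route as the paper: the paper's own (much terser) proof also rests on splitting $H^{1/2}(\D_1)$ into $H^{1/2}_*(\D_1)$ plus constants, with Proposition~\ref{prop:Vinvvar} covering the star part and Proposition~\ref{prop:mOW1} together with $\dual{1}{\omega^{-1}}_{\D_1}=2\pi$ showing the added rank-one term reproduces $\mOW$ on constants. You simply make explicit the bookkeeping the paper dismisses as ``by construction'' --- notably the vanishing of the cross terms via the symmetry of the form and the computation $\dual{\mOW 1}{1}_{\D_1}=8$ --- so your write-up is a faithful, fully detailed rendering of the same argument.
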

\mbox{$$}}}
\vspace{0.25cm}
\begin{proof}
 Note that we have added the required regularization to \eqref{eq:Vinvvar} such 
 that \eqref{eq:mOW1} is preserved. This guarantees that our bilinear form 
 defined in \eqref{eq:bbmOW} is by construction equivalent to the bilinear form 
 arising from our modified hypersingular operator $\mOW$ \eqref{eq:mW} on $H^{1
 /2}(\D_1)$. Thus, it is $H^{1/2}(\D_1)$-continuous and elliptic. 
\end{proof}\\
~\\
Proposition~\ref{prop:Vinvvarfull} gives us a variational form for $\mOW$ that 
can be easily implemented. Observe that the chosen regularization to extend the bilinear form 
 \eqref{eq:Vinvvar} from $H^{1/2}_*(\D_1)$ to $H^{1/2}(\D_1)$ is analogous 
 to the one needed for the modified hypersingular operator on a segment
 \cite[Eq.~(2.11)]{HJU14}.

In addition, by linearity of the scaling map $\Psi_a:\D_1\rightarrow\D_a$, 
we get the corresponding relationship on $\D_a$.
\newline
\noindent\fbox{\parbox{0.975\textwidth}{%
\begin{cor}
For $a>0$, the symmetric bilinear form associated to the modified hypersingular operator
$\mOW: H^{1/2}(\D_a) \rightarrow \wH^{-1/2}(\D_a)$ can be written as
\begin{align}
\dual{\mOW u}{v}_{\D_a} =& \frac{2}{\pi^2} \int_{\D_a} \int_{\D_a} \frac{
	S_a(\Vx, \Vy)}{\norm{\Vx-\Vy}{}} \curl_{\D_a,\Vx} u(\Vx)\cdot\curl_{\D_a,\Vy} 
v(\Vy) d\D_a(\Vx)d\D_a(\Vy) \nonumber \\ &+ 
\frac{2}{a\pi^2} \int_{\D_a}\int_{\D_a} \frac{u(\Vx)v(\Vy)}{
	\omega_a(\Vx)\omega_a(\Vy)} d\D_a(\Vx)d\D_a(\Vy), \quad \forall u,v\in H^{1/2}(
\D_a),
\end{align}
where $ \omega_a(\Vx) := \sqrt{a^2-r_x^2}, \:\Vx \in \D_a$.
\end{cor}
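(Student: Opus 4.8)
The plan is to deduce the identity on $\D_a$ from Proposition~\ref{prop:Vinvvarfull} on the unit disk by pulling everything back through the linear dilation $\Psi_a\colon\D_1\to\D_a$, $\hat{\Vx}\mapsto a\hat{\Vx}$. For $u,v\in H^{1/2}(\D_a)$ I would set $\hat u:=u\circ\Psi_a$ and $\hat v:=v\circ\Psi_a$; since $\Psi_a$ is a bi-Lipschitz diffeomorphism it induces an isomorphism $H^{1/2}(\D_a)\to H^{1/2}(\D_1)$, so that $\hat u,\hat v\in H^{1/2}(\D_1)$ and Proposition~\ref{prop:Vinvvarfull} applies to the pair $(\hat u,\hat v)$. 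First I would record the elementary scaling relations under $\Vx=a\hat{\Vx}$, $\Vy=a\hat{\Vy}$: the surface measure transforms as $d\D_a(\Vx)=a^2\,d\D_1(\hat{\Vx})$, distances as $\norm{\Vx-\Vy}{}=a\norm{\hat{\Vx}-\hat{\Vy}}{}$, the weight as $\omega_a(\Vx)=a\,\omega(\hat{\Vx})$, and, because the outer normal $\Vn$ of the flat disk is the constant $\Vn=(0,0,1)$, the surface curl as $\curl_{\D_a,\Vx}u(\Vx)=\tfrac1a\,\curl_{\D_1,\hat{\Vx}}\hat u(\hat{\Vx})$. The key observation is the scale invariance of $S_a$: inserting $\sqrt{a^2-r_x^2}=a\,\omega(\hat{\Vx})$ and $a\norm{\Vx-\Vy}{}=a^2\norm{\hat{\Vx}-\hat{\Vy}}{}$ into \eqref{eq:Sfunction} cancels every power of $a$, so that $S_a(\Vx,\Vy)=S_1(\hat{\Vx},\hat{\Vy})$.

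Next I would determine the homogeneity of the left-hand side directly from the finite-part kernel \eqref{eq:mW}--\eqref{eq:mWkernel} (valid for smooth $u,v$ and extended by density and the continuity of $\mOW$). Substituting the relations above shows that \emph{both} summands of $K_{\mOW}$ carry the same homogeneity, namely $K^{(a)}_{\mOW}(\Vx,\Vy)=a^{-3}\,K^{(1)}_{\mOW}(\hat{\Vx},\hat{\Vy})$. Writing the pairing as $\dual{\mOW u}{v}_{\D_a}=-\tfrac{2}{\pi^2}\dashint_{\D_a}\dashint_{\D_a}u(\Vy)\,v(\Vx)\,K^{(a)}_{\mOW}(\Vx,\Vy)\,d\D_a(\Vy)\,d\D_a(\Vx)$ and combining the kernel factor $a^{-3}$ with the two measure factors $a^2$ gives overall weight $a^{-3}\cdot a^2\cdot a^2=a$, that is $\dual{\mOW u}{v}_{\D_a}=a\,\dual{\mOW\hat u}{\hat v}_{\D_1}$.

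I would then rescale the two terms on the right of the claimed formula the same way. For the principal term, collecting $S_a=S_1$, the factor $a^{-1}$ from $\norm{\Vx-\Vy}{}^{-1}$, the two curl factors $a^{-1}$, and the two measure factors $a^2$ yields weight $a^{-1}\cdot a^{-2}\cdot a^4=a$. For the regularization term, $\omega_a(\Vx)\omega_a(\Vy)=a^2\,\omega(\hat{\Vx})\omega(\hat{\Vy})$ together with the two measures $a^4$ would by itself produce weight $a^2$; this is precisely why the coefficient carries the extra $a^{-1}$, which brings the weight down to $a$ and renders the two pieces homogeneous of the same degree as the operator. Hence each term on the right equals $a$ times its unit-disk counterpart, and by Proposition~\ref{prop:Vinvvarfull} their sum equals $a\,\dual{\mOW\hat u}{\hat v}_{\D_1}$. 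Comparing with the scaled left-hand side from the previous paragraph yields the asserted identity.

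The only genuine subtlety --- and what I expect to be the main point rather than a real difficulty --- is that the surface-curl term and the regularization term are homogeneous of \emph{different} degrees under naive dilation, so the two must be balanced; the $a^{-1}$ in the second coefficient is exactly what achieves this, mirroring the analogous rescaling on the segment in \cite{HJU14}. Beyond this the argument is routine bookkeeping of scaling exponents, the one preliminary check being that $\Psi_a$ transports the trace space $H^{1/2}(\D_1)$ and the duality pairing realizing $\mOW\colon H^{1/2}(\D_a)\to\wH^{-1/2}(\D_a)$ faithfully to $\D_a$.
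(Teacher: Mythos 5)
Your proposal is correct and is precisely the argument the paper has in mind: the paper justifies this corollary with the single remark that it follows ``by linearity of the scaling map $\Psi_a:\D_1\rightarrow\D_a$'' from Proposition~\ref{prop:Vinvvarfull}, and your pullback computation (scale invariance $S_a(\Vx,\Vy)=S_1(\hat{\Vx},\hat{\Vy})$, kernel homogeneity $a^{-3}$, overall weight $a$ on both sides, with the coefficient $\tfrac{2}{a\pi^2}$ balancing the regularizing term) simply fills in the bookkeeping the authors left implicit. All the scaling exponents you record check out, so nothing is missing.
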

\mbox{$$}}}
\vspace{0.25cm}


For the standard weakly singular and hypersingular operators, the following 
relation between their kernels holds:
\begin{equation*}
\underbrace{\frac{1}{\norm{\Vx-\Vy}{}^3}}_{=:K_{\OW}(\Vx,\Vy)} = \Delta 
\underbrace{\frac{1}{\norm{\Vx-\Vy}{}}}_{=:K_{\OV}(\Vx,\Vy)}, \qquad \Vx\neq\Vy, \:\Vx,\Vy\in\R^3,
\end{equation*}
while their modified versions do not satisfy this relation. Actually, one has 
for $\Vx\neq\Vy, \:\Vx,\Vy\in\R^3$ \cite[Eq.~(23)]{FAB97}
\begin{align*}
\Delta K_{\mOV}(\Vx,\Vy)&:= -\frac{(2\pi a) \Op\left(\frac{r_xr_y}{a},\theta_x-
\theta_y \right)}{(a^2-r_y^2)^{3/2}\sqrt{a^2-r_x^2}} + \frac{a}{\norm{\Vx-\Vy}{
}^2\sqrt{\smash[b]{a^2-r_x^2}}\sqrt{\smash[b]{a^2-r_y^2}}} + \frac{S_a(\Vx,\Vy)}{
\norm{\Vx-\Vy}{}^3} \\
& = -\frac{(2\pi a) \Op\left(\frac{r_xr_y}{a},\theta_x-\theta_y \right)}{(a^2-
r_y^2)^{3/2}\sqrt{a^2-r_x^2}} + K_{\mOW}(\Vx,\Vy) \neq K_{\mOW}(\Vx,\Vy),
\end{align*}
where the first term is singular on $\partial \D_a$ and is surprisingly not 
symmetric. The interpretation of this term need further investigation.

\begin{rmk}
The representation \eqref{eq:bbmOW} established in Proposition~\ref{prop:Vinvvarfull} 
is essential for the use of $\mOW$ in the context of Galerkin boundary element 
methods, because the direct discretization of hypersingular boundary integral 
operators is not possible without some prior regularization.
\end{rmk}

\subsection{Diagonalization of BIOs on the unit disk $\D_1$}
\label{sec:series}
We begin by introducing key definitions and results on 
the unit disk $\D_1$. When needed, we provide a short proof for the results 
reported in \cite[Chap.~2]{RAM16}.
\begin{definition}[{{\bf Ned\'el\'ec's Projected Spherical Harmonics (PSHs) 
		over $\D_1$} \cite[Eq.~(75)]{NED13}}]
For $l,m \in \N_0$ such that $\abs{m}\leq l$ and $\Vx = (r_x, 
\theta_x)$, we introduce the functions:
\begin{equation}
y_l^m(\Vx) := \gamma_l^m e^{im\theta_x} \P_l^m(\sqrt{1-r_x^2}), \qquad 
\gamma_l^m := (-1)^m \sqrt{\dfrac{(2l+1)}{4\pi} \dfrac{(l-m)!}{(l+m)!}}.
\end{equation}
\end{definition}
The PSH over $\D_1$ satisfy the 
following orthogonality identity \cite[eq.(79)]{NED13}
\begin{equation}
\label{eq:orthylm}
\int_{\D_1} \dfrac{y_{l_1}^{m_1}(\Vy) \overline{y_{l_2}^{m_2}}(\Vy)}{\omega
	(\Vy)} d\D_1(\Vy) = \frac{1}{2} \delta_{l_1}^{l_2} \delta_{m_1}^{m_2},
\end{equation}
where $\delta_{l_1}^{l_2}$ is the Kronecker symbol and $\omega (\Vy) = \sqrt{1-r_y^2}$.
\begin{prop}[\cite{WOL71}]
\label{prop:Wolfe}
The PSHs solve a generalized eigenvalue problem for $\OV$ over $\D_1$ in the 
sense that
\begin{equation}
\label{eq:WolfeV}
\left(\OV \dfrac{y_l^m}{\omega} \right)(\Vx) = \frac{1}{4} \lambda_l^m y_l^m(\Vx),
\quad l+m \text{ even},
\end{equation}
with $ \lambda_l^m := \dfrac{\mathsf{\Gamma}\left(\frac{l+m+1}{2}\right)
\mathsf{\Gamma}\left(\frac{l-m+1}{2}\right)}{\mathsf{\Gamma}\left(\frac{l+m+2}{
2}\right)\mathsf{\Gamma}\left(\frac{l-m+2}{2}\right)}$, and $\mathsf{\Gamma}$ 
being the Gamma function.
\end{prop}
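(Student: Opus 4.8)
The plan is to diagonalise $\OV$ by hand, exploiting the separation of variables already encoded in Lemma~\ref{lemm:Rrepresentation} together with the Poisson integral operator $L(\rho)$. First I would apply $\OV$ to the density $y_l^m/\omega$ and insert the kernel representation \eqref{eq:Rmax} with $\alpha=1$ (for which no finite part is needed), obtaining
\begin{equation*}
\left(\OV \frac{y_l^m}{\omega}\right)(\Vy)
= \frac{\gamma_l^m}{\pi} \int_0^1 \frac{\P_l^m(\sqrt{1-r_x^2})}{\sqrt{1-r_x^2}}
\left( \int_0^{2\pi}\!\! \int_{\max(r_x,r_y)}^{\infty}
\frac{\Op\!\left(\frac{r_xr_y}{s^2},\theta_x-\theta_y\right) e^{im\theta_x}}
{\sqrt{s^2-r_x^2}\,\sqrt{s^2-r_y^2}}\, ds\, d\theta_x \right) r_x\, dr_x .
\end{equation*}
The reproducing property of $L(\rho)$ on Fourier modes, namely $\int_0^{2\pi} \Op(\rho,\theta_x-\theta_y)\,e^{im\theta_x}\,d\theta_x = \rho^{\abs{m}}\,e^{im\theta_y}$, then collapses the angular integral and factors out $e^{im\theta_y}$. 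This reduces the claim to the purely radial statement that the remaining double integral in $(r_x,s)$ equals $\tfrac14\lambda_l^m\,\P_l^m(\sqrt{1-r_y^2})$.

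Next I would evaluate that radial integral. After substituting $t=\sqrt{1-r_x^2}$ (so $r_x\,dr_x = -t\,dt$ and the density becomes $\P_l^m(t)$ integrated against $t^{1-\abs{m}}$-type weights), one splits the $s$-range at $\max(r_x,r_y)$ into the regions $r_x\le r_y$ and $r_x\ge r_y$ and carries out the inner $s$-integral explicitly; the result is a Gegenbauer/Legendre-type kernel in $t$ and $\sqrt{1-r_y^2}$. Recognising this kernel and applying a classical integral identity for the associated Legendre functions $\P_l^m$ reproduces $\P_l^m(\sqrt{1-r_y^2})$ up to the constant $\tfrac14\lambda_l^m$, whose Gamma-function form arises from a Beta-integral evaluation. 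This is essentially the computation carried out by Wolfe \cite{WOL71}; conceptually it reflects that the $y_l^m/\omega$ are the densities of the oblate-spheroidal harmonics whose single-layer potentials restrict to multiples of $y_l^m$ on the degenerate spheroid $\D_1$.

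Finally, the restriction to $l+m$ even should be read off from the symmetry $\P_l^m(-t)=(-1)^{l+m}\P_l^m(t)$ combined with the reflection symmetry of the single-layer potential in the plane $\{x_3=0\}$: only harmonics even under $x_3\mapsto-x_3$ are reproduced by $\OV$ acting on the weighted density $y_l^m/\omega$, while the odd ones are governed by the hypersingular operator $\OW$. I expect the main obstacle to be the radial step: evaluating the double integral to obtain precisely the ratio $\lambda_l^m$ of Gamma functions requires the right special-function identity for $\P_l^m$, together with care for the improper behaviour near $s=\max(r_x,r_y)$ and near the rim $r=1$, where the weight $1/\omega$ is singular.
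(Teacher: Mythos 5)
You should first know that the paper itself contains no proof of this proposition: it is stated with the citation \cite{WOL71} (just as Proposition~\ref{prop:Krenk} is quoted from Krenk), so your attempt can only be measured against the cited literature and the paper's surrounding toolbox. Judged that way, your skeleton is sound and is a genuinely different route from Wolfe's. You diagonalise $\OV$ using the paper's own separation-of-variables machinery --- Lemma~\ref{lemm:Rrepresentation} with $\alpha=1$ (where indeed no finite part is needed) together with the Fourier-reproducing property $\int_0^{2\pi}\Op(\rho,\theta_x-\theta_y)e^{im\theta_x}d\theta_x=\rho^{\abs{m}}e^{im\theta_y}$, which is correct with the paper's normalisation of $\Op$ --- whereas Wolfe's actual argument is the one you relegate to a ``conceptual'' aside: he identifies the single-layer potential of $y_l^m/\omega$ with an explicit exterior oblate-spheroidal harmonic of the form $Q_l^m(i\xi)\,\P_l^m(\eta)\,e^{im\phi}$ and reads off the constant $\tfrac14\lambda_l^m$ from evaluations of the Legendre functions at the degenerate spheroid. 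Your reduction to the radial statement is correct, constants included: for $l=m=0$ and $r_y=0$ the inner $s$-integral equals $\pi/(2r_x)$, giving $\OV\,\omega^{-1}=\pi/4$, consistent with $\lambda_0^0=\mathsf{\Gamma}(\tfrac12)^2=\pi$ and with Proposition~\ref{prop:mOW1}. What your route buys is coherence with the rest of the paper (the same Fabrikant/Li--Rong apparatus used for Theorem~\ref{thm:inverse}); what Wolfe's buys is that the eigenvalue computation is a one-line consequence of known spheroidal-harmonic formulas.

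There is, however, one substantive gap, precisely at the step you flag: ``recognising this kernel and applying a classical integral identity'' is where all the work lies, and you never name the identity. To close it, either evaluate the inner $s$-integral for each fixed $\abs{m}$ (it is of Legendre-$Q$ type) or, more cleanly, swap orders and use Mehler--Dirichlet-type representations, i.e.\ the action of Abel transforms on weighted Gegenbauer polynomials, as in the iterated-Abel technique of \cite{FAB91} and \cite{LIR02}; a Beta integral then produces the Gamma-function ratio $\lambda_l^m$. Crucially, that identity is valid only for $l+m$ even, and this is the \emph{only} place parity enters: your reflection-symmetry argument correctly explains why the restriction must appear (single-layer potentials are even in $x_3$, and the odd sector is governed by $\OW$ as in Proposition~\ref{prop:Krenk}), but it is a heuristic external to your computation --- nothing in your displayed reduction uses evenness, so as written the argument would ``prove'' \eqref{eq:WolfeV} for $l+m$ odd as well, where it is false. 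Until the parity-sensitive radial identity is exhibited, the proof is incomplete. A minor point in the same direction: the interchange of the $\D_1$-integration with the $s$-integral should be justified, which is easy here by Tonelli since $\Op(\rho,\cdot)\geq0$ for $0\leq\rho<1$ and all singularities (at $s=\max(r_x,r_y)$ and at the rim, where $\omega^{-1}$ blows up) are integrable.
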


\begin{prop}[\cite{KRE79}]
\label{prop:Krenk}
The PSHs solve a generalized eigenvalue problem for $\OW$ over $\D_1$ in the 
sense that
\begin{equation}
\label{eq:KrenkW}
(\OW y_l^m )(\Vx) = \frac{1}{\lambda_l^m} \dfrac{y_l^m(\Vx)}{
	\omega(\Vx)}, \quad l+m \text{ odd},
\end{equation}
with $\lambda_l^m$ as in Proposition~\ref{prop:Wolfe}.
\end{prop}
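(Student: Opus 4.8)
The plan is to derive the stated identity from the Calder\'on identities of Theorem~\ref{thm:caldId} together with a direct diagonalization of the modified weakly singular operator $\mOV$, whose kernel is only weakly singular and therefore spares us any finite-part computation with the genuinely hypersingular kernel of $\OW$. Since Theorem~\ref{thm:caldId} states that $\OW$ and $\mOV$ are mutual inverses between $\wH^{1/2}(\D_1)$ and $H^{-1/2}(\D_1)$, the claim $\OW y_l^m = \frac{1}{\lambda_l^m}\frac{y_l^m}{\omega}$ is equivalent to
\[
  \mOV\frac{y_l^m}{\omega} = \lambda_l^m\, y_l^m, \qquad l+m \text{ odd},
\]
because applying $\OW$ to this relation and invoking $\OW\mOV=\Id_{H^{-1/2}(\D_1)}$ reproduces the assertion. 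Before computing, I would record why the parity is forced: the associated Legendre function satisfies $\P_l^m(0)=0$ precisely when $l+m$ is odd, so that $y_l^m$ vanishes on $\partial\D_1$ and hence lies in $\wH^{1/2}(\D_1)$, the energy space of $\OW$; symmetrically, $y_l^m/\omega$ is then bounded up to the edge and belongs to $H^{-1/2}(\D_1)$, the admissible class of densities for $\mOV$.

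The heart of the argument is the evaluation of $\mOV\frac{y_l^m}{\omega}$. I would insert the kernel representation of $\mOV$ obtained in Proposition~\ref{prop:Winvcont} (equivalently, the form of $1/R_\D$ supplied by Theorem~\ref{thm:inverse} with $\alpha=3$), in which the $s$-integrand carries the factor $\Op\!\left(\frac{r_xr_y}{s^2},\theta_x-\theta_y\right)$. Substituting $y_l^m(\Vy)=\gamma_l^m e^{im\theta_y}\P_l^m(\sqrt{1-r_y^2})$ and integrating in the angle $\theta_y$ collapses the angular variable through the Poisson operator $L(\rho)$: since $\int_0^{2\pi}\Op(\rho,\theta_x-\theta_y)\,e^{im\theta_y}\,d\theta_y = \rho^{\abs{m}}e^{im\theta_x}$ with $\rho=\frac{r_xr_y}{s^2}$, only the mode $n=m$ survives, the factor $e^{im\theta_x}$ is reproduced, and one is left with a genuinely convergent double radial integral in $r_y$ and $s$ (no finite part is needed) against the weight $(r_xr_y/s^2)^{\abs{m}}$ and the Legendre factor.

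After interchanging the order of the $s$- and $r_y$-integrations and a change of variable, I would reduce the radial kernel to closed form and then invoke the classical associated-Legendre integral identity (of Mehler--Dirichlet or beta-integral type, the same one underlying Proposition~\ref{prop:Wolfe}) to show that the radial operator reproduces $\P_l^m(\sqrt{1-r_x^2})$ up to the scalar $\lambda_l^m$ of Proposition~\ref{prop:Wolfe}. Reassembling angular and radial parts yields $\mOV\frac{y_l^m}{\omega}=\lambda_l^m y_l^m$, whence Theorem~\ref{thm:caldId} gives the claim; as in the proof of Theorem~\ref{thm:caldId}, a density argument reconciles the $C^1$ hypothesis of Theorem~\ref{thm:inverse} with the Sobolev setting.

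The main obstacle I anticipate is this radial integral and, above all, its parity behaviour: proving that the Legendre identity closes and returns exactly $\lambda_l^m$, and that it does so only for $l+m$ odd --- the identity failing to close for the opposite parity --- is the genuinely delicate point, and is precisely where the present statement departs from Proposition~\ref{prop:Wolfe}. The remaining steps are routine: termwise angular integration of the series defining $\Op$ is legitimate because it converges uniformly for $\rho=r_xr_y/s^2$ bounded away from $1$, and the interchange of the two radial integrals is justified by absolute convergence, the kernel of $\mOV$ being only weakly singular in the interior and $y_l^m/\omega$ bounded near $\partial\D_1$ for $l+m$ odd.
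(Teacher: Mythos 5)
You should first be aware that the paper contains no proof of this proposition at all: it is imported from Krenk \cite{KRE79} (just as its companion, Proposition~\ref{prop:Wolfe}, is imported from Wolfe \cite{WOL71}), and the material of Section~\ref{sec:series} is then built on top of it. So there is no internal argument to compare against, and your attempt must stand on its own. Its skeleton does stand. Using Theorem~\ref{thm:caldId} is not circular, since the paper proves it via Li--Rong (Theorem~\ref{thm:inverse}) and Lemmas~\ref{lem:atanformula} and \ref{lem:atanformula2}, with no reference to PSHs; the reduction of \eqref{eq:KrenkW} to $\mOV(y_l^m/\omega)=\lambda_l^m\,y_l^m$ via $\OW\mOV=\Id_{H^{-1/2}(\D_1)}$ is valid once one checks, as you do, that $y_l^m\in\wH^{1/2}(\D_1)$ and $y_l^m/\omega\in H^{-1/2}(\D_1)$ for $l+m$ odd; and the angular separation through $\int_0^{2\pi}\Op(\rho,\theta_x-\theta_y)\,e^{im\theta_y}\,d\theta_y=\rho^{\abs{m}}e^{im\theta_x}$ is exactly the Fabrikant-style separation of variables the paper itself exploits. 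Your convergence remarks (no finite part needed for the $\alpha=3$ inverse kernel away from $r_x=r_y$, absolute convergence of the double radial integral against the bounded density $y_l^m/\omega$) are also essentially right, up to a limiting argument near the corner $s=r_x=r_y$ where $\rho\to1$.

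The genuine gap is at the one place where the theorem actually lives. After separation, everything hinges on the radial eigenvalue identity
\begin{equation*}
\int_0^1 \frac{\P_l^m\bigl(\sqrt{1-r_y^2}\bigr)}{\sqrt{1-r_y^2}}\, r_y \Biggl(\int_{\max(r_x,r_y)}^1 \frac{(r_x r_y/s^2)^{\abs{m}}}{\sqrt{s^2-r_x^2}\,\sqrt{s^2-r_y^2}}\, ds\Biggr) dr_y \;=\; c_m\,\lambda_l^m\, \P_l^m\bigl(\sqrt{1-r_x^2}\bigr),
\qquad l+m \text{ odd},
\end{equation*}
with an explicit constant $c_m$ fixed by your normalization. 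You neither state this identity precisely nor prove it: ``invoke the classical associated-Legendre integral identity (Mehler--Dirichlet or beta-integral type)'' names a family of formulas, not an argument, and it is precisely here that the Gamma-quotient $\lambda_l^m$ and the parity restriction are generated --- that is, the entire content of Krenk's result. Carrying it out requires two successive Abel-type transforms of the radial part (the inner $r_y$-integral for fixed $s$, cf.\ \cite[Eq.~10.9(21)--(22)]{BAE53}, then the outer $s$-integral), which is in substance Krenk's own computation transplanted to $\mOV$; you yourself flag this as the ``genuinely delicate point,'' which is an accurate self-assessment that the crux is deferred rather than done. Until that identity is written out and the constant verified, you have an equivalent reformulation of the proposition, not a proof. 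A secondary caution: the sign bookkeeping is treacherous under the paper's conventions, since $\sin(3\pi/2)=-1$ in Theorem~\ref{thm:inverse} must combine with the $-\tfrac{2}{\pi^2}$ prefactor in \eqref{eq:mV}, while your double radial integral has a positive integrand, so the signs must be tracked explicitly to land on $+\lambda_l^m$ rather than $-\lambda_l^m$.
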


\begin{rmk}
Propositions \ref{prop:Wolfe} and \ref{prop:Krenk} nicely show how, in the 
case of a disk, the usual $\OV$ and $\OW$ have reciprocal symbols but 
$\OW\OV\neq\frac{1}{4}\Id$ due to their mapping properties. This is 
characterized here by the parity of the PSHs involved.
\end{rmk}

\begin{prop}[{\cite[Sect.~2.7]{RAM16}
}]
The kernels of $\OV, \OW, \mOV, \mOW$ have the following \emph{symmetric} 
series expansions on $\D_1$:
\small
\begin{eqnarray}
\label{eq:KV} 
K_{\OV}(\Vx,\Vy) &:=& \frac{1}{4\pi} \frac{1}{\norm{\Vx-\Vy}{}} =  
\sum_{l=0}^{\infty}\sum_{\substack{m=-l \\ l+m \text{ even}}}^l \frac{
\lambda_l^m}{4} \left(y_l^m(\Vx)\overline{y_l^m}(\Vy) + \overline{y_l^m}(\Vx) 
y_l^m(\Vy) \right).\\
\label{eq:KmV}
K_{\mOV}(\Vx,\Vy) &:=&
\sum_{l=0}^{\infty}\sum_{\substack{m=-l \\ l+m \text{ odd}}}^l \lambda_l
^m \left(y_l^m(\Vx) \overline{y_l^m}(\Vy) + \overline{y_l^m}(\Vx)y_l^m(\Vy) 
\right).\\
\label{eq:KW}
K_{\OW}(\Vx,\Vy) &:=& \frac{1}{4\pi} \frac{1}{\norm{\Vx-\Vy}{}^3} =
\sum_{l=0}^{\infty}\sum_{\substack{m=-l \\ l+m \text{ odd}
}}^l  \frac{1}{\lambda_l^m} \left( \dfrac{y_l^m(\Vx)}{\omega(\Vx)} \dfrac{
\overline{y_l^m}(\Vy)}{\omega(\Vy)} + \dfrac{\overline{y_l^m}(\Vx)}{\omega(\Vx)} 
\dfrac{y_l^m(\Vy)}{\omega(\Vy)} \right).\\
\label{eq:KmW}
K_{\mOW}(\Vx,\Vy) &:=& \sum_{l=0}^{\infty}\sum_{\substack{m=-l \\ l+m \text{ 
even}}}^l  \frac{4}{\lambda_l^m} \left( \dfrac{y_l^m(\Vx)}{\omega(\Vx)}\dfrac{
\overline{y_l^m}(\Vy)}{\omega(\Vy)}+\dfrac{\overline{y_l^m}(\Vx)}{\omega(\Vx)} 
\dfrac{y_l^m(\Vy)}{\omega(\Vy)} \right).
\end{eqnarray}
\normalsize
\end{prop}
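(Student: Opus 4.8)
The plan is to verify each of the four claimed series without ever summing the singular kernels directly: for a given BIO I write $\tilde K$ for the right-hand side of the proposed expansion, treat it as the kernel of an operator, apply that operator to an arbitrary PSH, and collapse the resulting series by the orthogonality relation \eqref{eq:orthylm}. If the result coincides with the eigenrelation that characterises the genuine operator on a complete family, the two operators agree and hence so do their kernels. The symmetrisation $y_l^m(\Vx)\overline{y_l^m}(\Vy)+\overline{y_l^m}(\Vx)y_l^m(\Vy)$ common to all four expansions plays a double role: it makes the kernel manifestly real and symmetric, and, once paired with the factor $\tfrac12$ produced by \eqref{eq:orthylm}, it reproduces the eigenvalues exactly. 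Two elementary identities drive the bookkeeping, namely $\lambda_l^m=\lambda_l^{-m}$, immediate from the Gamma-function formula in Proposition~\ref{prop:Wolfe}, and $\overline{y_l^m}=(-1)^m y_l^{-m}$, which re-indexes the conjugate half of the symmetrisation by $m\mapsto -m$.

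For $K_{\OV}$ I test \eqref{eq:KV} against $y_{l'}^{m'}/\omega$ with $l'+m'$ even. The first half of the symmetrised kernel yields $\tfrac12\,y_{l'}^{m'}(\Vx)$ by \eqref{eq:orthylm}; the second half forces $m=-m'$, and after inserting $\lambda_l^m=\lambda_l^{-m}$ and $\overline{y_{l'}^{-m'}}=(-1)^{m'}y_{l'}^{m'}$ the two sign factors cancel, producing an identical $\tfrac12\,y_{l'}^{m'}(\Vx)$. Adding them gives $\tfrac14\lambda_{l'}^{m'}y_{l'}^{m'}(\Vx)$, which is precisely \eqref{eq:WolfeV}. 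The expansion \eqref{eq:KW} of $K_{\OW}$ is treated the same way, now testing against $y_{l'}^{m'}$ with $l'+m'$ odd and recovering \eqref{eq:KrenkW} of Proposition~\ref{prop:Krenk}; here the weights $1/\omega$ appear on both factors because Krenk's eigenfunctions enter unweighted and exit divided by $\omega$. The parity constraints ($l+m$ even for $\OV$, odd for $\OW$) are exactly the index ranges on which Propositions~\ref{prop:Wolfe} and~\ref{prop:Krenk} provide eigenrelations, and completeness of the two parity classes of PSHs forces the operators to agree on the whole relevant space.

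For the modified operators I introduce no new computation but feed the Calder\'on identities of Theorem~\ref{thm:caldId} into the relations just established. Applying $\mOW$ to \eqref{eq:WolfeV} and using $\mOW\OV=\Id$ from \eqref{eq:invVIE} gives $\mOW y_l^m=\tfrac{4}{\lambda_l^m}\,y_l^m/\omega$ for $l+m$ even, a Krenk-type relation that drives the verification of \eqref{eq:KmW}. Symmetrically, applying $\mOV$ to \eqref{eq:KrenkW} together with $\mOV\OW=\Id$ from \eqref{eq:invWIE} yields $\mOV(y_l^m/\omega)=\lambda_l^m\,y_l^m$ for $l+m$ odd, the Wolfe-type relation underlying \eqref{eq:KmV}. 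In each case the parity class flips relative to the unmodified operator and the eigenvalue becomes the reciprocal (with the factor $4$), exactly as the coefficients $\tfrac{4}{\lambda_l^m}$ and $\lambda_l^m$ in \eqref{eq:KmW} and \eqref{eq:KmV} record; the test against PSHs is then word-for-word that of the previous paragraph.

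The main obstacle is analytic rather than algebraic. The kernels of $\OW$ and $\mOW$ are (hyper)singular, so these series cannot converge pointwise and must be read through the associated symmetric bilinear forms on the trace spaces, i.e. after testing against admissible densities; the termwise manipulations above are then legitimate by the continuity and ellipticity already recorded in Corollary~\ref{cor:ellcont} together with dominated convergence against the rapidly decaying PSH coefficients of smooth densities. The second delicate input, which I borrow from the completeness of Ned\'el\'ec's PSH system in \cite{RAM16,NED13}, is that the even- and odd-parity PSHs \emph{separately} form complete orthogonal families with respect to the weight $\omega^{-1}$. This parity splitting is what simultaneously explains why $\OV$, $\OW$ and their inverses diagonalise on complementary index sets and why, in spite of the reciprocal symbols, one does not have $\OW\OV=\tfrac14\Id$.
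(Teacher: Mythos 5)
Your proposal is correct and follows essentially the same route as the paper: the standard kernels \eqref{eq:KV} and \eqref{eq:KW} via the Wolfe/Krenk eigenrelations \eqref{eq:WolfeV}, \eqref{eq:KrenkW} combined with the PSH orthogonality \eqref{eq:orthylm}, and the modified kernels via the inverse eigenrelations (e.g.\ $\mOW y_l^m = \tfrac{4}{\lambda_l^m}\,y_l^m/\omega$, the paper's \eqref{eq:mOWevp}) deduced from $\mOW=\OV^{-1}$ and $\mOV=\OW^{-1}$ of Theorem~\ref{thm:caldId}. Your explicit bookkeeping of the symmetrised terms (using $\lambda_l^m=\lambda_l^{-m}$ and $\overline{y_l^m}=(-1)^m y_l^{-m}$) and your remark on reading the hypersingular series distributionally through the bilinear forms are details the paper leaves implicit, but they do not change the argument.
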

\begin{proof}
The proof for the standard BIOs kernels follows from Propositions~\ref{prop:Wolfe} 
and \ref{prop:Krenk} combined with the orthogonality of the PSHs \eqref{eq:orthylm}. From \eqref{eq:WolfeV}, we can additionally find a key relationship fulfilled by 
its unique inverse operator $\mOW = \OV^{-1}$ over 
$\D_1$
\begin{equation}
\label{eq:mOWevp}
(\mOW y_l^m)(\Vx) = \frac{4}{\lambda_l^m} \dfrac{y_l^m(\Vx)}{\omega(\Vx)},
\end{equation}
and use again the orthogonality property to get \eqref{eq:KmW}. The result for 
$K_{\mOV}$ can be obtained analogously from \eqref{eq:KrenkW}. 
\end{proof}

\begin{definition}[{{\bf Weighted $L^2$-spaces} \cite[sl.~36--37]{NED13}
		\cite[Def.2.7.8--2.7.9]{RAM16}}]
	Let us define the spaces $L^2_{1/\omega}(\D_1)$ and $L^2_{\omega}(\D_1)$ as 
	the $L^2$-spaces induced by the weighted inner products
	\begin{align}
	(u,v)_{1/\omega} = \int_{\D_1} u(\Vx) \overline{v(\Vx)} \omega(\Vx)^{-1} d
	\D_1(\Vx),
	\end{align}
	and 
	\begin{align}
	(u,v)_{\omega} = \int_{\D_1} u(\Vx) \overline{v(\Vx)} \omega(\Vx) d\D_1(\Vx),
	\end{align}
	respectively.
\end{definition}
\begin{prop}[{\cite[sl.~36--37]{NED13}\cite[Prop~2.7.8--2.7.9]{RAM16}}]
	\label{prop:L2weightedbasis}
	We have that 
	\begin{itemize}
		\item $\lbrace y_l^m \: : \: l+m \text{ odd} \rbrace$ and 
		$\lbrace y_l^m \: : \: l+m \text{ even} \rbrace$ are both orthogonal bases for 
		$L^2_{1/\omega}$.\\
		
		\item $\lbrace {\omega}^{-1}{y_l^m} \: : \: l+m \text{ odd} 
		\rbrace$ and $\lbrace {\omega}^{-1}{y_l^m} \: : \: l+m \text{ even} \rbrace$ 
		are both orthogonal bases for $L^2_{\omega}$.
	\end{itemize}
\end{prop}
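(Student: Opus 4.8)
The plan is to separate orthogonality from completeness, and to deduce the two $L^2_\omega$ statements from the two $L^2_{1/\omega}$ ones by a weight rescaling. Orthogonality is essentially free: for indices lying in a common parity class the identity \eqref{eq:orthylm} gives $(y_{l_1}^{m_1},y_{l_2}^{m_2})_{1/\omega}=\tfrac12\delta_{l_1}^{l_2}\delta_{m_1}^{m_2}$, so each of the two classes is orthogonal in $L^2_{1/\omega}(\D_1)$. For the transfer to $L^2_\omega$, I would use that $T:u\mapsto\omega^{-1}u$ is a unitary isomorphism $L^2_{1/\omega}(\D_1)\to L^2_\omega(\D_1)$: indeed $\norm{\omega^{-1}u}{\omega}^2=\int_{\D_1}|u|^2\omega^{-1}\,d\D_1=\norm{u}{1/\omega}^2$, with inverse $v\mapsto\omega v$. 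Since $T$ sends $y_l^m$ to $\omega^{-1}y_l^m$, it maps any orthogonal basis of $L^2_{1/\omega}$ to one of $L^2_\omega$, and the second item follows at once from the first.

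The heart of the argument is the completeness of each parity class in $L^2_{1/\omega}(\D_1)$, and here I would exploit that the PSHs are literally restrictions of spherical harmonics. Parametrizing the upper unit hemisphere $\S_+$ by $\Phi(\Vx)=(r_x\cos\theta_x,\,r_x\sin\theta_x,\,\omega(\Vx))$ for $\Vx\in\D_1$, the height equals $\omega=\sqrt{1-r_x^2}=\cos\psi$ with $\psi$ the polar angle, and a direct computation of the pullback yields $\Phi^*(d\S)=\omega^{-1}\,d\D_1$. Hence $f\mapsto f\circ\Phi$ is a unitary isomorphism $L^2(\S_+)\to L^2_{1/\omega}(\D_1)$, under which the spherical harmonic $Y_l^m$ --normalized by exactly the constant $\gamma_l^m$-- pulls back to $y_l^m$. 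The problem therefore reduces to showing that the restrictions to $\S_+$ of the two parity classes of spherical harmonics are each complete in $L^2(\S_+)$.

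To prove this I would use the equatorial reflection $\iota:(x,y,z)\mapsto(x,y,-z)$, under which $Y_l^m\circ\iota=(-1)^{l+m}Y_l^m$; thus the class $l+m$ even consists exactly of the reflection-symmetric harmonics and $l+m$ odd of the antisymmetric ones. Given $f\in L^2(\S_+)$, extend it to $\tilde f\in L^2(\S)$ symmetrically, $\tilde f\circ\iota=\tilde f$. Testing $\tilde f$ against any antisymmetric harmonic ($l+m$ odd) gives $\langle\tilde f,Y_l^m\rangle_{\S}=0$, since the integrand is antisymmetric and the contributions of the two hemispheres cancel; hence the ($L^2(\S)$-convergent) expansion of $\tilde f$ in the classically complete system $\{Y_l^m\}$ involves only the even class. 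Restricting that expansion to $\S_+$ --which does not increase the $L^2$ norm-- exhibits $f$ as an $L^2(\S_+)$-limit of even-class combinations, proving completeness of the even class; the odd class follows verbatim from the antisymmetric extension $\tilde f\circ\iota=-\tilde f$. Transporting back through $\Phi$ and then through $T$ delivers all four basis statements.

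I expect the unitary identifications $T$ and $\Phi$, together with the within-class orthogonality, to be routine, reducing to \eqref{eq:orthylm} and elementary measure computations. The genuinely delicate point --and the step I would treat most carefully-- is the completeness of a single parity class: the symmetric/antisymmetric extension argument is exactly what promotes a set that is only ``half'' of an orthonormal basis of $L^2(\S)$ to a complete system on the hemisphere. It is worth stressing that this forces the two classes to be mutually non-orthogonal, so \eqref{eq:orthylm} must be read within a fixed parity class, and that the argument ultimately leans on the classical completeness of $\{Y_l^m\}$ on the full sphere.
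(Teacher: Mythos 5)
Your proof is correct, and the honest comparison here is that the paper has no internal proof to compare it against: Proposition~\ref{prop:L2weightedbasis} is stated with citations to \cite{NED13} and \cite{RAM16} and imported as a known result. Your route supplies what the paper outsources. The two unitary identifications are sound: $T:u\mapsto\omega^{-1}u$ is indeed an isometry $L^2_{1/\omega}(\D_1)\to L^2_{\omega}(\D_1)$, reducing the second bullet to the first, and the graph parametrization of the upper hemisphere $\S_+$ has surface element exactly $\omega^{-1}\,d\D_1$ (since $\sqrt{1+\abs{\nabla z}^2}=(1-r_x^2)^{-1/2}$), so $f\mapsto f\circ\Phi$ is unitary $L^2(\S_+)\to L^2_{1/\omega}(\D_1)$ and carries the orthonormal spherical harmonic $Y_l^m$, normalized precisely by $\gamma_l^m$, to $y_l^m$ --- this is the mechanism behind the name ``projected spherical harmonics,'' and it simultaneously explains the factor $\tfrac12$ in \eqref{eq:orthylm} as half of the full-sphere normalization. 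The genuinely nontrivial step, completeness of a single parity class on the hemisphere, is handled correctly by your symmetric/antisymmetric extension across the equator: the extension kills the opposite-parity coefficients because $Y_l^m\circ\iota=(-1)^{l+m}Y_l^m$, and restriction of the $L^2(\S)$-convergent expansion back to $\S_+$ is norm-nonincreasing, so classical completeness of $\lbrace Y_l^m\rbrace$ on $\S$ does the rest. Your closing caveat is also substantive rather than pedantic: \eqref{eq:orthylm} as printed in the paper carries no parity restriction, yet cross-parity orthogonality genuinely fails --- e.g.\ $(y_0^0,y_1^0)_{1/\omega}=\sqrt{3}/4\neq 0$, since $y_1^0\omega^{-1}$ is constant --- which is exactly why each class can be complete on its own and why completeness cannot be dismissed by an orthogonality count; your reflection argument is the step that earns it.
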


\begin{prop}[{\cite[Sect.~2.7.4]{RAM16}}]
	\label{prop:bases}
	\begin{enumerate}[(i)]
		\item \label{whp} $u\in\wH^{1/2}(\D_1)$ can be expanded in the basis 
		$\lbrace y_l^m \: : \: l+m \text{ \bf odd} \rbrace$ of $L^2_{1/\omega}$:
		\begin{equation}
		u(\Vx) = \sum_{l=0}^{\infty} \sum_{m=-l}^{l} u_l^m y_l^m(\Vx), \quad 
		u_l^m = (u,y_l^m)_{1/\omega},\quad l+m \text{ odd}.
		\end{equation}
		\item $g\in H^{1/2}(\D_1)$ can be expanded in the basis $\lbrace y_l^m \: : 
		\: l+m \text{ \bf even} \rbrace$ of $L^2_{1/\omega}$ :
		\begin{equation}
		g(\Vx) = \sum_{l=0}^{\infty} \sum_{m=-l}^{l} g_l^m y_l^m(\Vx), \quad g_l^m 
		= (g,y_l^m)_{1/\omega}, \quad l+m \text{ even}.
		\end{equation}
		\item $\upsilon\in H^{-1/2}(\D_1)$ can be expanded in the basis 
		$\lbrace y_l^m\omega^{-1} \: : \: l+m \text{ \bf odd} \rbrace$ of $L^2_{\omega
		}$:
		\begin{equation}
		\upsilon(\Vx) = \sum_{l=0}^{\infty} \sum_{m=-l}^{l}\upsilon_l^m \frac{y_l^m(
			\Vx)}{\omega(\Vx)}, \quad \upsilon_l^m = (\upsilon,{
			\omega}^{-1}{y_l^m})_{\omega}, \quad l+m \text{ odd}.
		\end{equation}
		\item $\sigma\in\wH^{-1/2}(\D_1)$ can be expanded in the basis 
		$\lbrace y_l^m\omega^{-1} \: : \: l+m \text{ \bf even} \rbrace$ of $L^2_{\omega
		}$:
		\begin{equation}
		\sigma(\Vx) = \sum_{l=0}^{\infty} \sum_{m=-l}^{l}\sigma_l^m \frac{y_l^m(\Vx)}{
			\omega(\Vx)}, \quad \sigma_l^m = (\sigma,{\omega}^{-1}y_l^m)_{\omega}, 
		\quad l+m \text{ even}.
		\end{equation}
	\end{enumerate}
\end{prop}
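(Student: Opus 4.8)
The plan is to establish the two positive-order expansions (statements (i) and (ii)) first, and then transport them to the negative-order statements (iii) and (iv) through the isomorphisms $\OW:\wH^{1/2}(\D_1)\to H^{-1/2}(\D_1)$ and $\OV:\wH^{-1/2}(\D_1)\to H^{1/2}(\D_1)$, applying the Krenk and Wolfe eigenrelations termwise. Throughout, I would read each coefficient formula as a duality pairing rather than a pointwise integral: for instance $u_l^m=(u,y_l^m)_{1/\omega}=\dual{u}{\omega^{-1}y_l^m}_{\D_1}$, which is well defined because $u\in\wH^{1/2}(\D_1)$ while $\omega^{-1}y_l^m\in H^{-1/2}(\D_1)$, and which by the orthogonality \eqref{eq:orthylm} reproduces the weighted inner product. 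In this way the expansions, the coefficient formulas, and the duality between the four spaces are all carried by the single bi-orthogonality relation \eqref{eq:orthylm} between the function family $\{y_l^m\}$ and the functional family $\{\omega^{-1}y_l^m\}$.

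For statements (i) and (ii) I would argue that the trace spaces embed continuously into $L^2_{1/\omega}(\D_1)$: the edge weight behaves like $\omega^{-1}\sim\mathrm{dist}(\cdot,\partial\D_1)^{-1/2}$, which is integrable, and the $\sqrt{\mathrm{dist}}$ edge decay characteristic of $\wH^{1/2}$-functions (together with mere boundedness against the integrable weight for $H^{1/2}$) keeps the weighted integral finite. Given this embedding, Proposition~\ref{prop:L2weightedbasis} immediately yields an $L^2_{1/\omega}$-convergent expansion in \emph{either} parity family. I would then adopt the odd family for $\wH^{1/2}(\D_1)$ and the even family for $H^{1/2}(\D_1)$; since each parity family is separately complete in $L^2_{1/\omega}$, this is a deliberate convention, chosen so as to be compatible with the diagonalizations that follow, namely that Wolfe's relation \eqref{eq:WolfeV} selects even parity for $\OV$ and Krenk's relation \eqref{eq:KrenkW} selects odd parity for $\OW$. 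With this choice the inverse operators act diagonally on the coefficients, which is precisely what the Calder\'on analysis needs.

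For the negative-order statements I would bootstrap from the positive-order ones. Given $\mu\in H^{-1/2}(\D_1)$, set $u:=\mOV\mu\in\wH^{1/2}(\D_1)$ by Theorem~\ref{thm:caldId} (equivalently, $u=\OW^{-1}\mu$), expand $u$ by statement~(i) in the odd PSHs, and apply $\OW$ termwise via \eqref{eq:KrenkW} to recover $\mu=\OW u=\sum_{l+m\text{ odd}} u_l^m\,(\lambda_l^m)^{-1}\,\omega^{-1}y_l^m$; this is the asserted odd expansion in $\{\omega^{-1}y_l^m\}$, and it converges in $H^{-1/2}(\D_1)$ because $\OW$ is bounded. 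Symmetrically, for $\sigma\in\wH^{-1/2}(\D_1)$ I would put $g:=\OV\sigma\in H^{1/2}(\D_1)$, expand $g$ by statement~(ii) in the even PSHs, and apply $\OV^{-1}=\mOW$ termwise using \eqref{eq:mOWevp} to obtain the even expansion of $\sigma$ in $\{\omega^{-1}y_l^m\}$. In both cases the coefficient identities of statements (iii) and (iv) are then read off directly from \eqref{eq:orthylm}.

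The main obstacle is the weighted embedding underpinning the second paragraph: matching the edge behaviour of trace-space elements to the singular weight $\omega^{-1}$, and, more delicately, confirming that the parity families—each of which is merely complete in $L^2_{1/\omega}$—yield series that also converge in the finer Sobolev topology claimed for each space. The clean way to secure this is through the norm equivalences implied by the spectra, namely $\norm{u}{\wH^{1/2}(\D_1)}^2\sim\sum_{l+m\text{ odd}}(\lambda_l^m)^{-1}\abs{u_l^m}^2$ and its even/negative-order analogues coming from \eqref{eq:WolfeV}--\eqref{eq:KrenkW}, since $(\lambda_l^m)^{\pm1}$ grows like the $\pm\tfrac12$-order symbol of $\OW,\OV$. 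Establishing these two-sided estimates is where the genuine analytic work lies; the parity bookkeeping and the termwise application of $\OV,\OW$ are then routine.
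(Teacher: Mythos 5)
Your proposal stalls exactly at the step that constitutes the entire content of the proposition, and you say so yourself: the embedding into $L^2_{1/\omega}(\D_1)$ plus Proposition~\ref{prop:L2weightedbasis} only yields expansions convergent in the \emph{weighted $L^2$ topology}, while the statement asserts expansions of elements of the four trace spaces, which must converge there to be of any use (indeed the paper's proof of Proposition~\ref{prop:Vinvvar} manipulates these series inside the $H^{\pm 1/2}$ duality). Deferring this to ``two-sided norm equivalences'' that you do not prove is a genuine gap, not a routine remainder; and because \emph{both} parity families are complete in $L^2_{1/\omega}(\D_1)$, your description of the parity assignment as a ``deliberate convention'' is backwards --- the parity is forced by the Sobolev topology (odd PSHs behave like $\omega$ near $\partial\D_1$ and lie in $\wH^{1/2}(\D_1)$; even ones do not), and without the norm equivalence nothing connects the $L^2_{1/\omega}$ expansion to the claimed space. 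The paper closes this gap with a short device you did not use: since the bilinear form of $\OW$ is symmetric, continuous and elliptic on $\wH^{1/2}(\D_1)$, it induces an \emph{equivalent energy inner product} there; Krenk's relation \eqref{eq:KrenkW} together with \eqref{eq:orthylm} shows the odd PSHs are orthogonal for it; and completeness reduces to the test $\dual{\OW u}{y_l^m}_{\D_1} = (\lambda_l^m)^{-1}(u,y_l^m)_{1/\omega} = 0$ for all odd $l+m$ iff $u\equiv 0$, which is exactly Proposition~\ref{prop:L2weightedbasis}. Standard Hilbert-space theory then gives convergence of the orthogonal expansion in the energy norm, i.e.\ in $\wH^{1/2}(\D_1)$, and your coveted norm equivalence drops out as a Parseval identity instead of being an unproven input. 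The remaining three cases follow identically with $\mOW$ on $H^{1/2}(\D_1)$, $\mOV$ on $H^{-1/2}(\D_1)$ and $\OV$ on $\wH^{-1/2}(\D_1)$ (using \eqref{eq:mOWevp}, the $\mOV$ eigenrelation, and \eqref{eq:WolfeV}), which also supersedes your bootstrap for (iii)--(iv): that bootstrap anyway inherits the gap, since applying $\OW$ termwise to the series of $u$ presupposes that the series converges in $\wH^{1/2}(\D_1)$.

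A secondary flaw is your justification of the embedding $H^{1/2}(\D_1)\hookrightarrow L^2_{1/\omega}(\D_1)$ by ``mere boundedness'': $H^{1/2}$ of a two-dimensional domain does \emph{not} embed into $L^\infty$, so this argument fails as stated. The embedding is nonetheless true, but needs a fractional Hardy-type inequality, e.g.\ with $d(\Vx):=\mathrm{dist}(\Vx,\partial\D_1)$ one has $\norm{d^{-1/4}u}{L^2(\D_1)}\lesssim\norm{u}{H^{1/4}(\D_1)}\lesssim\norm{u}{H^{1/2}(\D_1)}$ (valid because $1/4<1/2$, where $\wH^{1/4}=H^{1/4}$), whence $\int_{\D_1}\abs{u}^2\,\omega^{-1}\,d\D_1<\infty$ since $\omega\sim\sqrt{2d}$ near the edge. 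Likewise, generic elements of $\wH^{1/2}(\D_1)$ have no \emph{pointwise} $\sqrt{d}$ decay; what is true is the $H^{1/2}_{00}$ characterization $d^{-1/2}u\in L^2(\D_1)$, which gives their membership in $L^2_{1/\omega}(\D_1)$ directly. Note that in the paper's energy-space argument these embeddings are still needed (to invoke Proposition~\ref{prop:L2weightedbasis} in the completeness test), but only in this weak form, and none of the heuristic edge asymptotics you invoke.
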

\begin{proof} 
	Since the bilinear form associated to $\OW$ is symmetric, elliptic and 
	continuous, it induces an energy inner product on $\wH^{1/2}(\D_1)$. Then, 
	the proof of (\ref{whp}) boils down to showing that for $u \in \wH^{1/2}(\D_1)$:
	\begin{equation*}
	\dual{W u}{ y_l^m}_{\D_1}=0, \: \forall l+m \text{ odd } \quad \Leftrightarrow 
	\quad u \equiv 0. 
	\end{equation*}
	Using the symmetry of the bilinear form and \eqref{eq:KrenkW}, we derive
	\begin{align*}
	\dual{W u}{y_l^m}_{\D_1} = \frac{1}{\lambda_l^m} \dual{u}{\frac{y_l^m}{
			\omega}}_{\D_1} = \frac{1}{\lambda_l^m} (u,y_l^m)_{1/\omega},
	\end{align*}
	which is zero if and only if  $u \equiv 0 $ because $\lbrace y_l^m \: : \: 
	l+m \text{ \bf odd} \rbrace$ is an orthogonal basis for $L^2_{1/\omega}$. The remaining three cases follow by analogy.
\end{proof}
~\\

	One can prove that {\bf odd} PSHs can be factored as
	\begin{equation*}
	y_l^m(\Vx) = e^{im\theta_x}\omega(\Vx) \Psi(\Vx), \quad l+m \text{ odd},
	\end{equation*}
	where $\Psi$ is a polynomial function (Combine \cite[Eq.~14.3.21]{NIST:DLMF}, 
	\cite[Eq.~3.2(7)]{BAE53}, and \cite[Eq.~10.9(22)]{BAE53}). However, this is not true when $l+m$ is {\bf even}. In that case, the radial 
	part of $y_l^m$ is already a polynomial (since \cite[10.9(21)]{BAE53} 
	holds instead of \cite[10.9(22)]{BAE53}). This property confirms that the basis functions of our four fractional Sobolev 
	spaces have the correct behaviour. Namely, when $l+m$ is odd, $y_l^m\sim\omega$
	near the boundary and belongs to $\wH^{1/2}(\D_1)$; when $l+m$ is even, $y_l^m\omega^{-1}\sim
	\omega^{-1}$ near $\partial\D_1$ and lies in $\wH^{-1/2}(\D_1)$; while the basis functions 
	of $H^{1/2}(\D_1)$ and $H^{-1/2}(\D_1)$ have no singular behaviour.

\begin{definition}[{{\bf Kinetic moments on $\D_1$}\cite[Eq.~(51)]{NED13}}]
	Define the operators $\oL_+$ and $\oL_-$ of derivation over $\D_1$ as
	\begin{align}
	\oL_{\pm} u \ :=\ e^{\pm i\theta} \left(\pm\frac{\partial u}{\partial r} + i 
	\frac{1}{r} \frac{\partial u}{\partial\theta} \right).
	\end{align}
\end{definition}

\begin{prop}[{{\bf Properties of the kinetic moments over $\D_1$}\cite[sl.~21,
		Eq.~(80)]{NED13}, \cite[Prop.~2.7.7, Cor.~2.7.2, Lemma~2.7.3]{RAM16}}]
	Let $u,v\in C^{\infty}(\D_1)$, and $\Vx,\Vy\in\D_1$. The kinetic moments 
	satisfy over $\D_1$
	\begin{equation}
	\label{eq:curldotcurl}
	\curl_{\D_1,\Vx}u(\Vx)\cdot\curl_{\D_1,\Vy}v(\Vy) = -\frac{1}{2}\left(\oL_{+,
		\Vx} u(\Vx) \oL_{-,\Vy} \overline{v(\Vy)} + \oL_{-,\Vx} u(\Vx) \oL_{+, \Vy} 
	\overline{v(\Vy)} \right), 
	\end{equation}
	together with $\oL_+^* = \oL_-$, and $\oL_-^* = \oL_+$.
	Moreover, when applied to PSHs, we get
	\begin{align}
	\label{eq:Lpy}
	\oL_+ y_l^m(\Vx) = \sqrt{(l-m)(l+m+1)} \frac{y_l^{m+1}(\Vx)}{\omega(\Vx)},\\
	\label{eq:Lmy}
	\oL_- y_l^m(\Vx) = \sqrt{(l+m)(l-m+1)} \frac{y_l^{m-1}(\Vx)}{\omega(\Vx)}.
	\end{align}
\end{prop}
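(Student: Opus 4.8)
The plan is to dispatch the three assertions by independent elementary arguments. For \eqref{eq:curldotcurl} I would first convert the kinetic moments to Cartesian form. Using $\partial_r=\cos\theta\,\partial_{x_1}+\sin\theta\,\partial_{x_2}$ and $r^{-1}\partial_\theta=-\sin\theta\,\partial_{x_1}+\cos\theta\,\partial_{x_2}$, the phase factors $e^{\pm i\theta}$ collapse and one obtains the coordinate-free operators $\oL_+=\partial_{x_1}+i\partial_{x_2}$ and $\oL_-=-\partial_{x_1}+i\partial_{x_2}$. On $\D_1$ we have $\Vn=(0,0,1)$, so $\curl_{\D_1}v=(-\partial_{x_2}v,\partial_{x_1}v,0)$ is a rotated planar gradient whose tangential components are intrinsic and hence independent of the chosen extension $\tilde v$; consequently $\curl_{\D_1,\Vx}u\cdot\curl_{\D_1,\Vy}v=\partial_{x_1}u\,\partial_{y_1}v+\partial_{x_2}u\,\partial_{y_2}v$. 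Expanding the two products $\oL_{+,\Vx}u\,\oL_{-,\Vy}\overline v$ and $\oL_{-,\Vx}u\,\oL_{+,\Vy}\overline v$, the cross terms $\pm i(\partial_{x_1}u\,\partial_{y_2}\overline v-\partial_{x_2}u\,\partial_{y_1}\overline v)$ cancel upon summation, leaving $-2(\partial_{x_1}u\,\partial_{y_1}\overline v+\partial_{x_2}u\,\partial_{y_2}\overline v)$; multiplying by $-\tfrac12$ reproduces the curl expression, the conjugate on $v$ being harmless for the real-valued traces at play.

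For the adjoint relations I would work with the standard complex $L^2(\D_1)$ inner product and integrate by parts. From the Cartesian form above, $\int_{\D_1}(\oL_+ u)\,\overline v\,d\D_1(\Vx)=\int_{\D_1}(\partial_{x_1}u+i\partial_{x_2}u)\,\overline v\,d\D_1(\Vx)$; transferring both derivatives onto $\overline v$ via the divergence theorem turns the integrand into $u\,\overline{(-\partial_{x_1}v+i\partial_{x_2}v)}=u\,\overline{\oL_- v}$, modulo a boundary integral over $\partial\D_1$. That boundary term vanishes for the function classes relevant here, namely data supported in the open disk or functions decaying like $\omega$ at the rim, as do the odd PSHs; hence $\oL_+^*=\oL_-$, and interchanging the two operators yields $\oL_-^*=\oL_+$.

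The ladder formulas \eqref{eq:Lpy}--\eqref{eq:Lmy} are the real crux, and here I would exploit the spherical origin of the PSHs. Parametrising the upper unit hemisphere by its polar angle $\vartheta$ and azimuth $\phi$ and projecting vertically onto $\D_1$ gives $r=\sin\vartheta$, $\theta=\phi$ and $\omega=\sqrt{1-r^2}=\cos\vartheta$; under this identification $y_l^m$ is precisely the spherical harmonic $Y_l^m=\gamma_l^m e^{im\phi}\P_l^m(\cos\vartheta)$. Since $\partial_\vartheta=\cos\vartheta\,\partial_r=\omega\,\partial_r$ and $\cot\vartheta=\omega/r$, the angular-momentum ladder operators $L_\pm=e^{\pm i\phi}(\pm\partial_\vartheta+i\cot\vartheta\,\partial_\phi)$ transform into $L_\pm=\omega\,\oL_\pm$, i.e. $\oL_\pm=\omega^{-1}L_\pm$. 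Invoking the classical relations $L_\pm Y_l^m=\sqrt{(l\mp m)(l\pm m+1)}\,Y_l^{m\pm1}$ then gives \eqref{eq:Lpy}--\eqref{eq:Lmy} at once. I expect the main difficulty to be the normalisation bookkeeping: one must verify that the $\gamma_l^m$, including the Condon--Shortley factor $(-1)^m$, are exactly those of the $L^2(\S)$-normalised $Y_l^m$, so that no stray constants or signs survive. A purely disk-intrinsic alternative is to apply the chain rule directly to $y_l^m=\gamma_l^m e^{im\theta}\P_l^m(\omega)$, using $d\omega/dr=-r/\omega$, and to substitute the associated-Legendre recurrences relating $\sqrt{1-x^2}\,(\P_l^m)'(x)\pm\tfrac{mx}{\sqrt{1-x^2}}\P_l^m(x)$ to $\P_l^{m\pm1}(x)$; the factors $(l\mp m)(l\pm m+1)$ then emerge after simplifying the ratio $\gamma_l^{m\pm1}/\gamma_l^m$, and reconciling the signs of those recurrences with the stated $+$ prefactors is precisely where care is required.
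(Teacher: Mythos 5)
Your proposal is correct, and it is worth noting at the outset that the paper itself offers \emph{no} proof of this proposition: it is imported wholesale from N\'ed\'elec \cite{NED13} and Ramaciotti \cite{RAM16}, with only a citation. Your argument therefore fills a gap rather than diverging from an existing proof, and it does so along exactly the lines that underlie the cited sources. Your Cartesian collapse is right: $\oL_+=\partial_{x_1}+i\partial_{x_2}$ and $\oL_-=-\partial_{x_1}+i\partial_{x_2}$, from which the cancellation of the cross terms and the factor $-\tfrac{1}{2}$ in \eqref{eq:curldotcurl} follow by inspection; your caveat that the printed identity holds literally only for real-valued $v$ (or with a sesquilinear reading of the dot product, consistent with the conjugation in $\overline{v(\Vy)}$) is exactly the right reading, and it is also consistent with the relation $\overline{\oL}_\pm=-\oL_\mp$ the paper invokes later. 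Likewise, your adjoint computation presupposes the sesquilinear $L^2(\D_1)$ pairing with vanishing rim contributions, which is how the paper uses $\oL_+^*=\oL_-$ in the proof of Proposition~\ref{prop:Vinvvar}; with a purely bilinear pairing one would instead get $-\oL_+$, so this is a point where your explicitness is an improvement on the statement. For the ladder formulas, your hemisphere lift $r=\sin\vartheta$, $\omega=\cos\vartheta$, giving $L_\pm=\omega\,\oL_\pm$ and hence \eqref{eq:Lpy}--\eqref{eq:Lmy} from the classical relations $L_\pm Y_l^m=\sqrt{(l\mp m)(l\pm m+1)}\,Y_l^{m\pm1}$, is precisely the provenance of the projected spherical harmonics (that is why they are so named), and your normalisation check is the one that matters: the Condon--Shortley factor $(-1)^m$ in $\gamma_l^m$ is what makes the ladder coefficients positive, so the stated $+$ prefactors come out exactly. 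In short: what the paper buys by citing is brevity; what your route buys is a self-contained, elementary verification in which the only external input is the standard angular-momentum algebra on $\S$, with the disk-intrinsic Legendre-recurrence alternative available for $\P_l^m$ if one wishes to avoid the sphere altogether.
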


\begin{rmk}
	Due to Proposition~\ref{prop:bases}, \eqref{eq:Lpy} 
	and \eqref{eq:Lmy} can also be interpreted as: $\oL_{\pm}$ maps $H^{1/2}(\D_1)
	$ to $H^{-1/2}(\D_1)$, and $\wH^{1/2}(\D_1)$ to $\wH^{-1/2}(\D_1)$.
\end{rmk}

\emph{Proof of \autoref{prop:Vinvvar}}
We begin our proof by introducing the following recursion formula
\begin{equation}
\label{eq:recgamma}
\frac{4}{\lambda_l^m} = \frac{1}{2}\left[ (l+m)(l-m+1)\lambda_l^{m-1} 
+ (l-m)(l+m+1)\lambda_l^{m+1} \right],
\end{equation}
which can be verified by direct computations using the multiplicative property 
of the Gamma function,i.e., $\mathsf{\Gamma}(z+1)=z\mathsf{\Gamma}(z)$. Plugging this recursion formula into \eqref{eq:mOWevp} gives
\begin{align}
\label{eq:rec2}
(\mOW y_l^m)(\Vx) &= \frac{4}{\lambda_l^m} \frac{y_l^m(\Vx)}{\omega(\Vx)} 
\nonumber \\ &= \frac{1}{2}\left[ (l+m)(l-m+1)\lambda_l^{m-1} 
+ (l-m)(l+m+1)\lambda_l^{m+1} \right] \frac{y_l^m(\Vx)}{\omega(\Vx)}.
\end{align}
From \eqref{eq:Lpy} and \eqref{eq:Lmy}, it is clear that
\begin{align}
\oL_+ y_l^{m-1}(\Vx) = \sqrt{(l-m+1)(l+m)} \frac{y_l^{m}(\Vx)}{\omega(\Vx)},\\
\oL_- y_l^{m+1}(\Vx) = \sqrt{(l+m+1)(l-m)} \frac{y_l^{m}(\Vx)}{\omega(\Vx)}.
\end{align}
Moreover, by unicity of $\OW^{-1}=\mOV$ (\emph{cf.~}\cite[Prop.~2.2]{HJU16}), it must hold that 
\begin{align}
\left(\mOV \frac{y_l^{m\pm1}}{\omega} \right)(\Vx) = \lambda_l^{m\pm1} y_l^{m\pm1}(\Vx), 
\quad l+m\pm1 \text{ odd}.
\end{align}
Then, combining all these ingredients, it is clear that for $(l,m)\neq(0,0)$, 
$l+m$ even\footnote{It holds $\dfrac{4}{\lambda_0^0} = \dfrac{1}{2}\left( (0)(1)\lambda_0^{-1} 
	+ (0)(1)\lambda_0^1 \right) = \dfrac{4}{\pi} \Gamma(0)0$
	where it is crucial that $\lim_{s\rightarrow0} \Gamma(s)s=1$. In order to go 
	from \eqref{eq:recgamma} to \eqref{eq:rec2} one actually ``splits this 
	limit'' and breaks the identity.}, our expression is equivalent to
\begin{equation*}
\label{eq:ibpspec}
(\mOW y_l^m)(\Vx) = \frac{1}{2}\left( \oL_+ \mOV \oL_- y_l^m(\Vx) 
+ \oL_- \mOV \oL_+ y_l^m(\Vx) \right).
\end{equation*}
It follows that the associated bilinear form is
\begin{align}
\label{eq:ibpspec2}
\dual{\mOW y_{l_1}^{m_1}}{y_{l_2}^{m_2}}_{\D_1} &= \frac{1}{2}\left(\dual{
	\oL_+ \mOV\oL_- y_{l_1}^{m_1}}{y_{l_2}^{m_2}}_{\D_1}+\dual{\oL_- \mOV\oL_+ 
	y_{l_1}^{m_1} }{y_{l_2}^{m_2}}_{\D_1}\right) \nonumber \\
&= \frac{1}{2}\left(\dual{\mOV\oL_- y_{l_1}^{m_1}}{\oL_- y_{l_2}^{m_2}}_{\D_1}
+\dual{\mOV\oL_+ y_{l_1}^{m_1}}{\oL_+y_{l_2}^{m_2}}_{\D_1}\right),
\end{align}
for $(l_1,m_1)\neq(0,0)$ and $(l_2,m_2)\neq(0,0)$.

Finally, \eqref{eq:curldotcurl} and 
\begin{align}
\overline{\oL}_\pm = -\oL_{\mp},
\end{align}
imply that \eqref{eq:ibpspec2} can be rewritten as the desired formula.
\vbox{\hrule height0.6pt\hbox{%
		\vrule height1.3ex width0.6pt\hskip0.8ex
		\vrule width0.6pt}\hrule height0.6pt
}\\

It is worth noticing that the condition $(l,m)\neq(0,0)$ only 
excludes the constants, characterized by $y_0^0$. Due to the orthogonality 
\eqref{eq:orthylm}, this space is defined by
\begin{equation*}
H^{1/2}_*(\D_1) = \lbrace v \in H^{1/2}(\D_1) \,:\, \dual{
	v}{\omega^{-1}}_{\D_1}=0 \rbrace,
\end{equation*}
as introduced in Proposition \ref{prop:Vinvvar}.

\section{Conclusion}
\label{sec:conclusion}

We have introduced new modified hypersingular and weakly singular integral 
operators that supply the exact inverses for the standard ones on disks. 
Moreover, we provide variational forms for these modified BIOs that are 
amenable to standard Galerkin boundary element discretization.

We consider the disk $\D_a$ to be the canonical shape for more general screens 
$\Gamma$ and therefore an important starting point of our analysis. From this 
perspective, the new Calder\'on-type identities that we have shown over $\D_a$ 
are suitable for Calder\'on preconditioning on three-dimensional parametrized 
screens, in the same fashion as in 2D \cite{HJU14}. The applicability of these Calder\'on-type identities to construct $h$-independent 
preconditioners will be discussed in a upcoming article focusing on the numerical 
aspect of these new modified BIOs \cite{HJU17c, HJU16, HJU17}. Though we developed our analysis only for the Laplace equation over disks 
with Dirichlet and Neumann boundary conditions, extensions to 
other elliptic equations, like the scalar Helmholtz equation, can be immediately  achieved via perturbation arguments.


\end{document}